\newtheorem{theorem}{Theorem}[section]
\newtheorem{lemma}[theorem]{Lemma}
\newtheorem{corollary}[theorem]{Corollary}
\newtheorem{proposition}[theorem]{Proposition}
\numberwithin{equation}{section}
\newtheorem*{Conjecture}{Conjecture}
\theoremstyle {definition}
\newtheorem{remark}[theorem]{Remark}
\DeclareMathOperator{\Div}{div}
\DeclareMathOperator{\lip}{Lip}
\DeclareMathOperator{\dist}{dist}
\DeclareMathOperator{\Osc}{osc}
\DeclareMathOperator{\R}{\mathbf R}
\begin{document}
\title[Stability of Bernstein type theorem]{Stability of Bernstein type theorem for the minimal surface equation}
\begin{abstract}
{Let $ \Omega \subsetneq \mathbf{R}^n\,(n\geq 2)$ be an unbounded convex domain. We study the minimal surface equation in $\Omega$ with boundary value given by the sum of a linear function and a bounded uniformly continuous function in $ \mathbf{R}^n$.  If $ \Omega $ is not a half space, we prove that the solution is unique. If $ \Omega $ is a half space, we prove that graphs of all solutions form a foliation of $\Omega\times\mathbf{R}$. This can be viewed as a stability type theorem for Edelen-Wang's Bernstein type theorem in \cite{EW2021}. We also establish a comparison principle for the minimal surface equation in $\Omega$.}
\end{abstract}

\author{Guosheng Jiang}
\address{School of Mathematical Sciences, Laboratory of Mathematics and Complex Systems, MOE, Beijing Normal University, Beijing, 100875, China}
\email{gsjiang@bnu.edu.cn}
\author{Zhehui Wang}
\address{Academy of Mathematics and Systems Science, Chinese Academy of Sciences, Beijing, 100190, China}
\email{wangzhehui@amss.ac.cn}
\author{Jintian Zhu}
\address{Beijing International Center for Mathematical Research, Peking University, Beijing, 100871, China}
\email{zhujintian@bicmr.pku.edu.cn}
\subjclass[2020]{35A09, 35B50, 35B53, 35J93, 53A10}
\maketitle
\section{Introduction}
In the research of minimal graphs over Euclidean space, one important result is the Bernstein theorem, which says that entire minimal graphs over $\mathbf R^n$ with $2\leq n\leq 7$ are hyperplanes (refer to \cite{Bernstein27, DeGiorgi65, Almgren66, Simons68}). For $n\geq 8$, there were non-planar entire minimal graphs constructed by Bombieri, De Giorgi and Giusti \cite{BDGG69} and so the Bernstein theorem fails in higher dimensions. Just recently, Edelen and the second named author \cite{EW2021} established a Bernstein type theorem for minimal graphs over unbounded convex domains. In {\it any} dimension they can show that if the boundary of a minimal graph over an unbounded convex domain $\Omega\subsetneq \mathbf{R}^n$ is contained in a hyperplane, then so is the minimal graph itself.

Here we would like to interpret the Edelen-Wang's theorem from the view of partial differential equations, and the situation can be divided into two cases:
\begin{itemize}
\item If the unbounded convex domain $\Omega\subsetneq \mathbf{R}^n$ is not a half space, the Edelen-Wang's theorem can be understood as the {\it existence} and {\it uniqueness} for solutions of the minimal surface equation in $\Omega$ with a linear boundary value. 
\item If the unbounded convex domain $\Omega\subsetneq \mathbf{R}^n$ is a half space, the uniqueness fails but all solutions form a one-parameter family, whose graphs form a {\it foliation} of $\Omega\times\mathbf{R}$.
\end{itemize}

This interpretation motivates us to further study the stability of the Edelen-Wang's theorem. We are going to show that the existence, uniqueness and foliation structure are preserved even if the linear boundary value is perturbed by a bounded and uniformly continuous function.

In order to state our main theorem, we introduce some necessary notation. In the rest of this paper, let $n\geq 2$ and $\Omega\subsetneq \mathbf{R}^n$ be an unbounded domain. Moreover, let $l:\mathbf R^n\to \mathbf R$ be a linear function and $\phi:\mathbf R^n\to \mathbf R$ a bounded and uniformly continuous function. In the following, we consider the minimal surface equation
\begin{equation}\label{Eq: MSE}
\mathcal Mu:=\Div\left(\frac{\nabla u}{\sqrt{1+|\nabla u|^2}}\right)=0\mbox{ in } \Omega
\end{equation}
with the Dirichlet boundary value
\begin{equation}\label{Eq: boundary value}
u=l+\phi \mbox{ on } \partial\Omega.
\end{equation}

Now our main theorem can be stated as following.
\begin{theorem}\label{Thm: main}
The following statements are true:
\begin{itemize}
\item[1.] If $\Omega\subsetneq \mathbf{R}^n$ is a convex domain but not a half space, then \eqref{Eq: MSE}-\eqref{Eq: boundary value} has a unique solution.
\item[2.] If $\Omega\subsetneq \mathbf{R}^n$ is a half space, then all solutions of \eqref{Eq: MSE}-\eqref{Eq: boundary value} form a one-parameter family and the graphs of these solutions form a foliation of $\Omega\times \mathbf R$.
\end{itemize}
\end{theorem}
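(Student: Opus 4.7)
The plan is to bootstrap from the unperturbed Dirichlet problem studied in \cite{EW2021}, treating $\phi$ as a bounded correction. Let $v$ denote a solution of $\mathcal{M}v=0$ in $\Omega$ with $v=l$ on $\partial\Omega$; existence is furnished by the Edelen--Wang theorem. Set $\alpha=\inf_{\mathbf{R}^n}\phi$ and $\beta=\sup_{\mathbf{R}^n}\phi$. Since adding a constant preserves $\mathcal{M}=0$, the functions $v+\alpha$ and $v+\beta$ are again solutions of \eqref{Eq: MSE}, and on $\partial\Omega$ they bracket $l+\phi$, so they serve as global sub- and supersolutions for the perturbed problem.

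For existence, I would exhaust $\Omega$ by bounded subdomains $\Omega_k=\Omega\cap B_k$ and solve \eqref{Eq: MSE} on each $\Omega_k$ with boundary values $l+\phi$ on $\partial\Omega\cap\bar{B}_k$ and $v+\beta$ on $\Omega\cap\partial B_k$; classical existence theory (Gilbarg--Trudinger) applies. By the maximum principle each $u_k$ satisfies $v+\alpha\le u_k\le v+\beta$. Interior gradient estimates for MSE (Bombieri--De~Giorgi--Miranda) then yield subsequential convergence to a solution $u$ on $\Omega$ with $v+\alpha\le u\le v+\beta$, while uniform continuity of $\phi$ together with supporting hyperplane barriers (available since $\Omega$ is convex) ensures that $u=l+\phi$ on $\partial\Omega$.

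For uniqueness in part~1, suppose $u_1, u_2$ are two solutions. The bracketing above yields $|u_1-u_2|\le\beta-\alpha$ throughout $\Omega$, while $u_1=u_2$ on $\partial\Omega$. The comparison principle for MSE on unbounded convex non-half-space domains, established elsewhere in this paper, then forces $u_1=u_2$. For part~2, choose coordinates so that $\Omega=\{x_n>0\}$. For each $c\in\mathbf{R}$ the function $v_c(x):=l(x)+cx_n$ is an entire linear solution of \eqref{Eq: MSE} coinciding with $l$ on $\partial\Omega$, so running the construction above with $v_c$ in place of $v$ produces a solution $u_c$ of \eqref{Eq: MSE}--\eqref{Eq: boundary value} with $v_c+\alpha\le u_c\le v_c+\beta$. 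Strict monotonicity of $c\mapsto u_c$ follows from the strong maximum principle applied to $u_{c'}-u_c$, which satisfies a linear elliptic equation, vanishes on $\partial\Omega$, and tends to $+\infty$ at infinity when $c'>c$; since $u_c(x)\to\pm\infty$ as $c\to\pm\infty$ for each fixed $x\in\Omega$ and $c\mapsto u_c(x)$ is continuous (by local compactness of the family via interior estimates), the graphs foliate $\Omega\times\mathbf{R}$. Completeness of the family --- that every solution $u$ equals some $u_c$ --- follows by identifying the unique $c$ for which $u-v_c$ is bounded on $\Omega$ and invoking the half-space version of the comparison principle.

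The principal obstacle is the comparison principle for MSE on unbounded convex domains itself. Because $\mathcal{M}$ is only nonuniformly elliptic, classical comparison arguments cannot be applied directly, and it is precisely here that the dichotomy between the half-space and non-half-space cases must be detected: bounded differences of solutions with common boundary data must vanish in the latter case but can persist, together with an additive linear slope $cx_n$, in the former. Once this principle is in hand, both parts of the theorem follow from the Perron-type construction, the bracketing, and the maximum principle.
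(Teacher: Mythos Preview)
Your overall strategy matches the paper's: existence by exhaustion with linear barriers $l+\alpha,\,l+\beta$, uniqueness in the non-half-space case via a comparison principle (which you rightly flag as the crux), and in the half-space case a one-parameter family $u_c$ built from the tilted planes $v_c=l+cx_n$. Two steps in Part~2, however, are genuine gaps rather than routine details.

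\textbf{Completeness of the family.} You assert that for an arbitrary solution $u$ one can ``identify the unique $c$ for which $u-v_c$ is bounded.'' This is precisely the nontrivial content of the paper's Proposition~\ref{halfconst}, and it does \emph{not} follow from anything you have set up. A priori a solution of \eqref{Eq: MSE}--\eqref{Eq: boundary value} in $\mathbf R^n_+$ could fail to be within bounded distance of any hyperplane. The paper's argument is to sandwich an auxiliary solution $v$ with linear boundary data between $u-\|\phi\|_{C^0}$ and $u+\|\phi\|_{C^0}$ via an exhaustion, and then invoke the Edelen--Wang Bernstein theorem to force $v=l+cx_n$; this simultaneously produces the slope $c$ and the bound $|u-l-cx_n|\le\|\phi\|_{C^0}$. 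Without this step, your appeal to ``the half-space version of the comparison principle'' has nothing to act on.

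\textbf{Strict monotonicity.} You argue that $u_{c'}-u_c$ ``tends to $+\infty$ at infinity when $c'>c$,'' and then invoke the strong maximum principle. But the difference tends to $+\infty$ only as $x_n\to\infty$; the set $\{u_{c'}-u_c<-\varepsilon\}$ lies in a slab $\{0<x_n<\tau\}$ yet may be unbounded in the tangential directions, so no classical maximum principle applies directly. The paper (Proposition~\ref{abmono}) handles this by first using uniform continuity up to the boundary (Lemma~\ref{uni-continuous}) to push the set a definite distance off $\partial\Omega$, so that interior gradient estimates make the linearized operator uniformly elliptic there, and then invoking the linear ``decay or blow-up'' alternative for domains with the exterior cone property (Proposition~\ref{Prop: unbounded estimate}) to derive a contradiction from boundedness of the difference. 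This is the same machinery underlying the comparison principle you already flagged as the principal obstacle; it is needed here as well, not just for Part~1.
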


When $\Omega$ is a convex domain but not a half space, the existence comes from the standard exhaustion argument while the uniqueness part turns out to be much more difficult. 
{Before our work, the discussions on the uniqueness mainly focus on dimension two. In particular, Nitsche \cite{Nit65} came up with the following conjecture.
\begin{Conjecture}
Suppose $D\subset \R^2$ is contained in a wedge with opening angle less than $\pi$. Then, the solution of the minimal surface equation with continuous boundary value is unique.
\end{Conjecture}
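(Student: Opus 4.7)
The plan is to reduce Nitsche's conjecture to a sliding/comparison argument adapted to the wedge geometry, leveraging the comparison principle and uniqueness theorem established in this paper. Let $W$ denote the ambient wedge of opening $2\alpha<\pi$ containing $D$, and suppose $u_1,u_2$ are two solutions of the MSE in $D$ sharing the same continuous boundary value $\varphi$ on $\partial D$.

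First I would reduce to the case where $D$ is unbounded with bounded and uniformly continuous boundary data. If $D$ is bounded, the classical comparison principle for the mean curvature operator gives uniqueness immediately. For unbounded $D$, the containment in a wedge of angle strictly less than $\pi$ forces solutions to grow at most linearly at infinity (via interior gradient estimates scaled to the distance from the vertex), so after subtracting a common linear function one may assume $\varphi$ is bounded and both $u_i$ are bounded; standard boundary regularity for the MSE then yields uniform continuity.

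Next I would run a sliding argument. Set $v_t=u_2+t$ and
\begin{equation*}
t^\ast=\inf\{t\geq 0:v_t\geq u_1\text{ in }\overline D\},
\end{equation*}
which is finite by boundedness. By the symmetric argument it suffices to prove $t^\ast=0$. Suppose to the contrary that $t^\ast>0$ and pick $x_k\in\overline D$ with $(v_{t^\ast}-u_1)(x_k)\to 0$. The difference $w:=v_{t^\ast}-u_1$ satisfies a linear second-order elliptic equation $Lw=0$ obtained by integrating the linearization of the mean curvature operator along the path from $\nabla u_1$ to $\nabla u_2$. If the sequence $x_k$ has an accumulation point $x_\infty$ in $\overline D$, then either $x_\infty\in D$ and the strong maximum principle for $L$ forces $w\equiv 0$ in the connected component, contradicting $w|_{\partial D}=t^\ast>0$, or $x_\infty\in\partial D$, which immediately contradicts the same boundary condition.

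The main obstacle is ruling out the remaining possibility that $|x_k|\to\infty$, which amounts to a Phragm\'en--Lindel\"of type statement for $L$ in the wedge. In the uniformly elliptic regime (e.g.\ under a priori $C^1$ bounds on the $u_i$) one can construct a positive supersolution of the form $r^\beta\cos(\beta\theta)$ with $\beta=\pi/(2\alpha)>1$ in polar coordinates centered at the vertex of $W$, showing that the infimum of $w$ cannot be approached at infinity. The genuine difficulty is that $|\nabla u_i|$ may blow up at infinity, causing $L$ to degenerate; handling this seems to require either a priori gradient decay drawn from the two-dimensional conformal structure of minimal graphs, or a refined barrier built by applying Theorem~\ref{Thm: main} to solve the MSE in $W$ with a suitably chosen boundary value and then using the resulting one-parameter family from Part~2 as supersolutions. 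I expect obtaining control at infinity that remains effective as $|\nabla u|\to\infty$ to be the principal technical obstacle.
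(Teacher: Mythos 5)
The statement you are trying to prove is quoted in the paper as Nitsche's original conjecture, and the paper's surrounding discussion makes clear that it is \emph{false} as stated: Collin \cite{Collin90} constructed a counterexample showing that uniqueness fails when the (merely continuous) boundary value grows too fast at infinity. The conjecture is only known to hold after adding a growth restriction --- for bounded boundary values it was confirmed by Hwang \cite{Hw88} and M\={\i}kljukov \cite{VM79}, and the present paper's contribution (Theorem \ref{Thm: comparison}, Corollary \ref{corunique}) concerns bounded uniformly continuous perturbations of linear data in higher dimensions. So no proof of the statement as written can succeed, and the interesting question is where your argument breaks.

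It breaks at the very first reduction. You assert that containment in a wedge of opening less than $\pi$ ``forces solutions to grow at most linearly at infinity,'' so that after subtracting a linear function one may assume the boundary value and both solutions are bounded. This is not true: the boundary value $\varphi$ is only assumed continuous and may grow arbitrarily fast along the two edges of the wedge, and the interior gradient estimate for the minimal surface equation bounds $|\nabla u|$ in terms of the \emph{oscillation} of $u$ on a larger ball --- it gives no a priori linear growth bound unless you already control $u$, which is exactly what is at issue. Collin's example lives precisely in this regime: rapidly growing boundary data admitting more than one solution. Consequently your sliding quantity $t^\ast$ need not be finite, and the whole scheme collapses before the Phragm\'en--Lindel\"of step you correctly identify as delicate. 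If you add the hypothesis that $\varphi$ is bounded (or bounded after subtracting a fixed linear function), your outline becomes a reasonable sketch of the known two-dimensional results and is close in spirit to the paper's Proposition \ref{Prop: almost linear comparison theorem}, whose proof replaces your barrier construction by the decay-or-blow-up alternative of Proposition \ref{Prop: unbounded estimate} on the set $\{u_1-u_2>\epsilon\}$, where uniform continuity up to the boundary guarantees a positive distance to $\partial D$ and hence uniform ellipticity of the linearized operator --- circumventing the gradient degeneracy you flag at the end.
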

It turns out that the original Nitsche's conjecture is too ideal. Actually, Collin \cite{Collin90} provided a counterexample indicating that the uniqueness can not hold if the boundary value grows too fast. On the other hand, after further requiring the boundary value to be bounded,  Hwang \cite{Hw88} and M\={\i}kljukov \cite{VM79} confirmed Nitsche's conjecture independently. Moreover, when $D\subset \R^2$ is the union of a compact convex subset with finitely many disjoint half strips attached to its boundary, Sa Earp and Rosenberg \cite{SaRo89} proved the uniqueness of the solution of the minimal surface equation with bounded uniformly continuous boundary value. 

For higher dimensions, Massari and Miranda \cite{MM84} showed the existence of a solution of the minimal surface equation in unbounded convex domain with any continuous boundary value, but the uniqueness problem remained unsolved. Again Collin's counterexample suggests that the uniqueness needs to be considered with boundary values satisfying controlled growth (e.g. bounded). With a further limit to bounded uniformly continuous boundary values, our work gives a partial answer to Massari-Miranda's uniqueness problem. Namely, we have the following immediate corollary of Theorem \ref{Thm: main}.}
\begin{corollary}\label{corunique}
If $\Omega\subsetneq \mathbf{R}^n$ is a convex domain but not a half space, then the solution of the minimal surface equation in $\Omega$ with bounded and uniformly continuous boundary value is unique. 
\end{corollary}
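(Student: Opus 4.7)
The plan is to reduce Corollary \ref{corunique} directly to part~1 of Theorem \ref{Thm: main}. The key observation is that the class of boundary data ``linear $+$ bounded uniformly continuous on $\mathbf R^n$'' used in Theorem \ref{Thm: main} already subsumes the class ``bounded uniformly continuous on $\partial\Omega$'' treated here, once one extends the datum from $\partial\Omega$ to the whole ambient space.

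Concretely, suppose $g:\partial\Omega\to\mathbf R$ is bounded and uniformly continuous. I would first extend $g$ to a bounded and uniformly continuous function $\phi:\mathbf R^n\to \mathbf R$. A standard McShane--Whitney type construction does exactly this, preserving the sup norm and the modulus of continuity up to a harmless universal factor; alternatively one may combine a Tietze extension with convolution regularization. Choosing $l\equiv 0$ and $\phi$ as above, the pair $(l,\phi)$ is admissible in the sense of Theorem \ref{Thm: main}, and the resulting Dirichlet datum $l+\phi$ restricted to $\partial\Omega$ coincides with $g$. Since $\Omega\subsetneq\mathbf R^n$ is convex but not a half space, applying part~1 of Theorem \ref{Thm: main} produces a unique solution of \eqref{Eq: MSE}-\eqref{Eq: boundary value}, which is exactly the solution of the minimal surface equation with boundary value $g$ that the corollary asks about.

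Uniqueness depends only on the boundary trace and the equation in the interior, so it is intrinsic to $g$ and independent of the particular extension chosen. Consequently, there is essentially no obstacle to overcome here: the whole content of the corollary is packaged inside Theorem \ref{Thm: main}, and the only work is the classical extension step that bridges the mild discrepancy between ``a function on $\partial\Omega$'' and ``the trace on $\partial\Omega$ of a function on $\mathbf R^n$.''
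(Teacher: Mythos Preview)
Your proposal is correct and matches the paper's own treatment: the paper simply declares the corollary ``an immediate corollary of Theorem \ref{Thm: main}'' without further argument, and you have spelled out the one routine step (extend the bounded uniformly continuous datum from $\partial\Omega$ to $\mathbf R^n$, take $l\equiv 0$) needed to place the problem in the setting of Theorem \ref{Thm: main}. There is nothing to add.
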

In our proof, the uniqueness comes from the following more general comparison theorem for the minimal surface equation.
\begin{theorem}\label{Thm: comparison}
Assume that $\Omega\subsetneq \mathbf{R}^n$ is a convex domain but not a half space and that $u_1$ and $u_2$ are two solutions of equation \eqref{Eq: MSE} satisfying the boundary value
\begin{equation}
u_i=l_i+\phi_i\mbox{ on }\partial\Omega,\,\, i=1, 2,
\end{equation}
where $\phi_1$ and $\phi_2$  are bounded and uniformly continuous functions. If $u_1\leq u_2$ on $\partial\Omega$, then $u_1\leq u_2$ in $\Omega$.
\end{theorem}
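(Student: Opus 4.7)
The plan is to argue by contradiction, combining the strong maximum principle with a translation-limit analysis at infinity. I first establish the height bound $|u_i - l_i| \le C_i$ in $\Omega$ via suitable linear barriers (using that $\Omega$ is contained in a half-space); together with Korevaar's interior gradient estimate this yields a uniform bound $|Du_i| \le C$, so $w := u_1 - u_2$ satisfies a uniformly elliptic linear divergence-form equation $Lw = 0$ in $\Omega$. Suppose for contradiction that $M := \sup_\Omega w > 0$. Since $w \le 0$ on $\partial\Omega$, the strong maximum principle for $L$ forbids $M$ from being attained at an interior point, so there is a sequence $x_k \in \Omega$ with $|x_k|\to\infty$ and $w(x_k)\to M$.

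Translate by setting $\hat u_i^k(y) := u_i(y + x_k) - u_1(x_k)$, a sequence of solutions of \eqref{Eq: MSE} on $\Omega_k := \Omega - x_k$. The a priori bounds ensure that $\hat u_i^k$ is locally uniformly bounded with uniformly bounded gradients, so after passing to a subsequence $\Omega_k$ converges in the Hausdorff sense to a convex limit $\Omega_\infty \ni 0$ and $\hat u_i^k \to \hat u_i^\infty$ in $C^{1,\alpha}_{\mathrm{loc}}$ to solutions on $\Omega_\infty$. Because the recession cone of a convex set is invariant under translation, the recession cone of $\Omega_\infty$ coincides with that of $\Omega$, so $\Omega_\infty$ is again an unbounded convex domain that is not a half-space. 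On $\partial\Omega_\infty$ the boundary data take the form $\hat u_i^\infty = l_i + \psi_i - \gamma_i$, where $\psi_i$ is a bounded uniformly continuous limit of the translates $\phi_i(\cdot + x_k)$ and $\gamma_i$ are finite constants arising from the height bound. The limit $w^\infty := \hat u_1^\infty - \hat u_2^\infty$ attains its maximum $M$ at the interior point $0$, so by the strong maximum principle $w^\infty \equiv M$ on the connected component of $\Omega_\infty$ through $0$.

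The identity $w^\infty \equiv M$ on $\partial\Omega_\infty$ then forces the linear function $l_1 - l_2$ to be bounded on $\partial\Omega_\infty$. Since $\Omega_\infty$ is convex but not a half-space, its boundary is unbounded and not contained in any hyperplane, so a linear function bounded on $\partial\Omega_\infty$ must be constant: write $l_1 \equiv l_2 + c$. A direct computation using the normalizations $\hat u_1^\infty(0) = 0$ and $\hat u_2^\infty(0) = -M$, the identity $\gamma_2 = \gamma_1 + c$, and the boundary inequality $u_1 \le u_2$ on $\partial\Omega$ passed to the limit on $\partial\Omega_\infty$, forces $M \le 0$, contradicting $M > 0$. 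The main obstacle is setting up the translation limit correctly---tracking the bounded constants $\gamma_i$, handling the potentially diverging linear terms $l_i(x_k)$, passing to a common subsequence so that all relevant constants converge, and confirming Hausdorff convergence of $\Omega - x_k$ to a convex limit whose recession cone coincides with that of $\Omega$; this last point is the geometric heart of the argument, since it is what transports the non-half-space hypothesis into the limit problem.
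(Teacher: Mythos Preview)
Your translation--limit approach is genuinely different from the paper's (which instead works on the superlevel set $\{u_1-u_2>\epsilon\}$, shows it stays uniformly away from $\partial\Omega$, and then appeals to a Liouville-type nonexistence result, Proposition~\ref{Prop: unbounded estimate}, for bounded positive solutions of linear equations in domains with the exterior cone property). However, there are two substantive gaps.

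First, the height bound $|u_i-l_i|\le C_i$ cannot be obtained by ``linear barriers.'' Comparing an arbitrary solution $u_i$ with the linear function $l_i\pm\|\phi_i\|_{C^0}$ on an \emph{unbounded} domain is precisely the comparison principle you are trying to prove; the argument is circular. In the paper this estimate \eqref{Eq: C0 estimate} is deduced from the Edelen--Wang Bernstein-type theorem via Proposition~4.3, and that input is essential---without it, nothing rules out solutions that drift arbitrarily far from $l_i$ in the interior. Losing the height bound, you also lose boundedness of $M=\sup w$ (needed so that $\hat u_2^k(0)=-w(x_k)$ stays bounded), the interior gradient bound, and hence the compactness that drives the whole argument.

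Second, the geometric claim that $\Omega_\infty$ inherits the recession cone of $\Omega$ and is therefore not a half space is false: translation limits of convex sets can have strictly larger recession cones. With $\Omega=\{(x,y):y>x^2\}$ (convex, not a half plane) and $x_k=(k,k^2+1)\in\Omega$, one has $\Omega-x_k=\{y>x^2+2kx\}$, which converges locally to the half plane $\{x<0\}$; with $x_k=(0,k)$ the limit is all of $\mathbf R^2$. The latter possibility is fatal: if $\Omega_\infty=\mathbf R^n$ there is no boundary to which one can pass the inequality $u_1\le u_2$, and although a Liouville argument would then give $\hat u_i^\infty=l_i+d_i$ and hence $l_1-l_2\equiv\mathrm{const}$, this by itself does not force $M\le 0$. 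You identified this step as ``the geometric heart of the argument''; as stated it does not hold, and the subsequent boundary computation (which also tacitly requires uniform continuity up to $\partial\Omega$, i.e.\ Lemma~\ref{uni-continuous}) cannot be carried out.
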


Based on the work in \cite{EW2021}, we can reduce above theorem to the following special one. {We say that a domain $\Omega$ satisfies the {\it exterior cone property} if there is an infinity cone outside $\Omega$.}
\begin{proposition}\label{Prop: almost linear comparison theorem}
Let $\Omega$ be an unbounded domain with the exterior cone property. Assume that $u_1$ and $u_2$ are two solutions of equation \eqref{Eq: MSE} satisfying
\begin{equation}
u_i=l+\phi_i \mbox{ on } \partial\Omega,\,\, i=1,2,
\end{equation}
and
\begin{equation}\label{pro1.3}
\|u_i-l\|_{C^0(\bar\Omega)}<+\infty,\,\, i=1,2,
\end{equation}
where $\phi_1$ and $\phi_2$ are bounded and uniformly continuous functions with $\phi_1\leq \phi_2$. Then $u_1\leq u_2$.
\end{proposition}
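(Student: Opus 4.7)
The plan is to argue by contradiction via a translation-and-compactness scheme combined with the strong maximum principle, invoking the exterior cone property to rule out a degenerate limit in the last step. Set $v := u_1 - u_2$; the hypothesis $\|u_i - l\|_{C^0(\bar\Omega)} < \infty$ makes $v$ bounded on $\Omega$, while $\phi_1 \leq \phi_2$ gives $v \leq 0$ on $\partial\Omega$. Linearizing the minimal surface operator along $t\nabla u_1 + (1-t)\nabla u_2$ shows that $v$ satisfies a divergence-form linear elliptic equation $\partial_i(a^{ij}(x)\partial_j v) = 0$, whose coefficients are uniformly elliptic wherever $|\nabla u_i|$ stays bounded---in particular throughout the interior, by the Bombieri--De Giorgi--Miranda gradient estimate applied to the bounded perturbations $u_i - l$ of the linear function $l$. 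Suppose towards a contradiction that $M := \sup_\Omega v > 0$. The strong maximum principle then rules out attainment of $M$ at any interior point, while continuity of $v$ up to $\partial\Omega$ (from uniform continuity of $\phi_i$) rules out attainment on $\partial\Omega$; hence there exists $x_k \in \Omega$ with $|x_k| \to \infty$ and $v(x_k) \to M$.

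Next I translate and pass to a limit: set $u_i^{(k)}(y) := u_i(x_k + y) - l(x_k)$ and $\phi_i^{(k)}(y) := \phi_i(x_k + y)$ on $\Omega_k := \Omega - x_k$. The uniform bound $\|u_i^{(k)} - l\|_\infty \leq \|u_i - l\|_\infty$ together with interior gradient estimates and elliptic regularity yields locally smooth uniform bounds in $k$, and Arzel\`a--Ascoli applied to the bounded uniformly continuous $\phi_i^{(k)}$ gives local uniform convergence. Extracting subsequences, $u_i^{(k)} \to u_i^{(\infty)}$ smoothly on compact subsets of the Hausdorff--Kuratowski limit domain $\Omega_\infty$, with $u_i^{(\infty)}$ a solution of the minimal surface equation satisfying the boundary condition $l + \phi_i^{(\infty)}$, the limits still obeying $\phi_1^{(\infty)} \leq \phi_2^{(\infty)}$, and $(u_1^{(\infty)} - u_2^{(\infty)})(0) = M$ by construction.

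I would then split according to the behavior of $d(x_k, \partial\Omega)$. If a subsequence has $d(x_k, \partial\Omega)$ bounded, then $\Omega_\infty$ has nonempty boundary with $0$ in its interior; the strong maximum principle applied to $M - (u_1^{(\infty)} - u_2^{(\infty)}) \geq 0$ forces $u_1^{(\infty)} \equiv u_2^{(\infty)} + M$ on the component of $\Omega_\infty$ containing $0$, which gives $\phi_1^{(\infty)} - \phi_2^{(\infty)} \equiv M > 0$ on the corresponding part of $\partial\Omega_\infty$---an outright contradiction with $\phi_1^{(\infty)} \leq \phi_2^{(\infty)}$. The main obstacle, and where the exterior cone property is essential, is the opposing case $d(x_k, \partial\Omega) \to \infty$: then $\Omega_\infty = \R^n$, and the interior gradient estimate on $\R^n$ together with $\|u_i^{(\infty)} - l\|_\infty < \infty$ bounds $|\nabla u_i^{(\infty)}|$, so Moser's Bernstein theorem for entire minimal graphs with bounded gradient forces $u_i^{(\infty)} = l + c_i$ for constants with $c_1 - c_2 = M$. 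My plan to exclude this scenario is to use the infinite cone $C \subset \R^n \setminus \Omega$ to produce a companion sequence $z_k \in \Omega$ with $d(z_k, \partial\Omega)$ bounded and $v(z_k) \to M$: travel from $x_k$ in the direction of $\partial C$ and use Harnack-type propagation for the uniformly elliptic equation satisfied by $v$ in the interior to transport the near-$M$ value to a region close to $\partial\Omega$, thereby reducing to the first case. The technical heart I anticipate lies in this propagation step, balancing the geometry of the exterior cone against the modulus of continuity inherited from $\phi_i$.
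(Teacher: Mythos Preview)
Your translation-compactness scheme is a genuinely different route from the paper's, and the first case (when $d(x_k,\partial\Omega)$ stays bounded) is fine. The gap is in the second case, where $d(x_k,\partial\Omega)\to\infty$ and the limit domain is all of $\mathbf R^n$. Your proposed ``Harnack-type propagation'' cannot close the argument as stated: the Harnack inequality applies to the nonnegative supersolution $M-v$, and chaining it along a path from $x_k$ to a point at bounded distance from $\partial\Omega$ multiplies the Harnack constant once per ball. Since the number of balls is comparable to $d(x_k,\partial\Omega)\to\infty$, the bound you obtain on $M-v(z_k)$ is $(M-v(x_k))$ times an exponentially growing factor, which gives no control. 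The exterior cone sits at a fixed location and does not shorten this path; nothing in the setup forces $M-v(x_k)$ to decay fast enough to beat the compounding. So the reduction to the first case does not go through, and this is exactly the place where the exterior-cone hypothesis must do real work.

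The paper's proof avoids the compactness machinery entirely and is much shorter. Assuming $u_1(x_0)>u_2(x_0)$, set $\epsilon=(u_1(x_0)-u_2(x_0))/2$, $w=u_1-u_2-\epsilon$, and $\Omega_\epsilon=\{w>0\}$. Uniform continuity of the $u_i$ up to $\partial\Omega$ (Lemma~\ref{uni-continuous}) gives $\dist(\Omega_\epsilon,\partial\Omega)\geq\delta>0$; the interior gradient estimate then bounds $|\nabla u_i|$ uniformly on $\Omega_\epsilon$, so $w$ solves a \emph{uniformly} elliptic divergence-form equation there, vanishes on $\partial\Omega_\epsilon$, and is bounded by \eqref{pro1.3}. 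If $\Omega_\epsilon$ is bounded the maximum principle forces $w\equiv 0$; if unbounded, $\Omega_\epsilon$ inherits the exterior cone property and Proposition~\ref{Prop: unbounded estimate} says any positive solution with zero boundary data must be unbounded --- contradiction. The substantive input is Proposition~\ref{Prop: unbounded estimate}, proved by comparison with an explicit cone solution; this is precisely the global tool that your propagation step was groping toward, and once you have it the translation/blow-up argument becomes unnecessary.
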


When $\Omega$ is a half space, we can construct a family of solutions of \eqref{Eq: MSE}-\eqref{Eq: boundary value} characterized by their growth rate at infinity. In fact, the graph of such a solution has a unique ``approximate hyperplane'' in the form of $\{x: l(x)+cx_n=0\}$ (refer to Proposition \ref{existhalf}). On the other hand, given any solution of \eqref{Eq: MSE}-\eqref{Eq: boundary value}, we can prove that it must belong to the family of solutions in our construction (refer to Corollary \ref{halfconstcoro}). Hence $c$ can serve as a parametrization for all solutions, {and we denote by $u_c$ the solution having ``approximate hyperplane'' in the form of $\{x: l(x)+cx_n=0\}$. The precise meaning of ``foliation" in Theorem \ref{Thm: main} is that the map
$$
\Phi:\mathbf R^n_+\times \mathbf R\to \mathbf R^n_+\times \mathbf R,\,\, (x,c)\mapsto (x,u_c(x))
$$
can be shown to be a homeomorphism (refer to Proposition \ref{homeo}). Moreover, the homeomorphism can be improved to be a $C^1$-diffeomorphism after changing parametrization if $\phi$ is $C^1$ with $\|\phi\|_{C^1(\partial\mathbf R_+^n)}<\infty$ (refer to Proposition \ref{halfc1}). 

The rest of this paper is organized as follows. Section \ref{existence} contains the proof of the existence of \eqref{Eq: MSE}-\eqref{Eq: boundary value} when $\Omega\subsetneq \mathbf{R}^n$ is an unbounded convex domain. In Section \ref{decayblowup}, we present a decay or blow up alternative for linear elliptic equations of divergence form, {which will be used later}. In Section \ref{unique}, we prove the first part of Theorem \ref{Thm: main}, while the second part is proved in Section \ref{forliation}.

\section{Existence: the exhaustion method}\label{existence}
\subsection{Construction of exhaustion domains}
Here we devote to show the following proposition.

\begin{proposition}\label{exhaust}
If $\Omega\subsetneq\mathbf{R}^n$ is an unbounded convex domain, then there is an exhaustion $\{\Omega_k\}_{k\geq 1}$ of $\Omega$ satisfying the following
\begin{itemize}
\item $\Omega_k$ is smooth and convex;
\item $\Omega_k\subset B_{k}\cap \Omega$;
\item $\Omega_k\subset \Omega_{k+1}.$
\end{itemize}
\end{proposition}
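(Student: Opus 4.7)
The plan is to construct $\Omega_k$ as smooth convex superlevel sets of a strongly concave function on $\mathbf R^n$. The key ingredient is that, since $\Omega$ is convex, the signed distance function
\[
d(x) = \dist(x,\partial\Omega)\text{ for } x\in\overline\Omega, \qquad d(x) = -\dist(x,\Omega)\text{ otherwise},
\]
is concave on $\mathbf R^n$, and so is any mollification $d_\epsilon := d\ast \eta_\epsilon$ against a standard nonnegative mollifier $\eta_\epsilon$. First I would build a nonsmooth but convex nested exhaustion: for each $k$ set
\[
K_k := \{x\in\mathbf R^n : d(x)\geq 2/k\}\cap \overline{B}_{k-1},
\]
a compact convex subset of $\Omega$. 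A direct check gives $K_k\subset\operatorname{int}(K_{k+1})$, and since every $x\in\Omega$ has $d(x)>0$ and finite norm, $\bigcup_k K_k=\Omega$. These $K_k$ have only Lipschitz boundary, so the next step is to smooth them.

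To smooth I would work with the strongly concave function $F_k(x):=d_{\epsilon_k}(x)-|x|^2/R_k^2$, which is $C^\infty$ on $\mathbf R^n$. The quadratic penalty makes $F_k$ strictly concave with $F_k(x)\to-\infty$ as $|x|\to\infty$, so $F_k$ has a unique maximum point; away from it $\nabla F_k\neq 0$ and the superlevel sets are bounded smooth convex open subsets of $\mathbf R^n$ by the implicit function theorem (convex since $F_k$ is concave). Choosing $\epsilon_k\downarrow 0$, $R_k\uparrow\infty$ and levels $t_k>0$ carefully, I would arrange $\Omega_k := \{F_k>t_k\}$ to satisfy $K_{k-1}\subset \Omega_k\subset\operatorname{int}(K_k)$: on $K_{k-1}$ we have $d\geq 2/(k-1)$ and $|x|\leq k-2$, so for $\epsilon_k$ small and $R_k$ large $F_k$ stays above $t_k$ there, while outside $K_k$ either $d<2/k$ or $|x|\geq k-1$, and each alternative forces $F_k<t_k$ once the parameters are tuned.

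The three required properties then follow at once: smoothness and convexity of $\Omega_k$ hold by construction; $\Omega_k\subset\operatorname{int}(K_k)\subset B_{k-1}\cap\Omega\subset B_k\cap\Omega$ gives the containment; $\Omega_k\subset\operatorname{int}(K_k)\subset K_{k+1}\subset\Omega_{k+1}$ gives the nesting; and $\bigcup_k\Omega_k\supset\bigcup_k K_{k-1}=\Omega$ yields exhaustion. The main technical obstacle will be the coordinated, inductive choice of the triple $(\epsilon_k,R_k,t_k)$ so that both inclusions $K_{k-1}\subset\Omega_k\subset\operatorname{int}(K_k)$ hold simultaneously; the required smallness of $\epsilon_k$ depends on a uniform modulus-of-continuity estimate for $d$ on the compact sets $K_k$, and the radii $R_k$ must grow fast enough that the quadratic penalty is negligible where we want $F_k$ to stay large, yet confining enough to force boundedness of $\Omega_k$. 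Once this bookkeeping is settled, the remainder is routine.
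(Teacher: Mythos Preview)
Your overall strategy—take superlevel sets of a smooth strictly concave function—is the right one and is essentially what the paper does. But your specific choice $F_k=d_{\epsilon_k}-|x|^2/R_k^2$ cannot achieve the sandwiching $K_{k-1}\subset\Omega_k\subset\operatorname{int}(K_k)$, because the claim that ``$|x|\geq k-1$ forces $F_k<t_k$ once the parameters are tuned'' is false in general. The point is that $d$ may grow linearly in $|x|$. Take $\Omega=\{x_n>0\}$, so $d(x)=x_n$; at $x=(0,\dots,0,k-1)$ one has $F_k(x)\approx (k-1)-(k-1)^2/R_k^2$, and forcing this below $t_k\sim 1/k$ requires $R_k\lesssim\sqrt{k}$. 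On the other hand your condition $F_k>t_k$ on $K_{k-1}$ needs $(k-2)^2/R_k^2<2/(k-1)-2/k-2\epsilon_k\sim 2/k^2$, i.e.\ $R_k\gtrsim k^2$. These two constraints on $R_k$ are incompatible for large $k$, so no choice of $(\epsilon_k,R_k,t_k)$ works. A quadratic penalty with small coefficient is simply too soft to confine the superlevel set of a linearly growing concave function to $B_{k-1}$ while remaining negligible on $B_{k-2}$.

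The paper avoids this tension by building the ball into the distance function itself: instead of $d=\dist(\cdot,\partial\Omega)$ it uses $d_k(x)=\dist(x,\partial(B_k\cap\Omega))$, the distance to the boundary of the already-truncated convex set $U_k=B_k\cap\Omega$. This $d_k$ is concave on $U_k$ and automatically satisfies $d_k(x)\leq k-|x|$, so no extra penalty is needed; one then approximates $d_k$ uniformly by a smooth concave $f_k$ (the paper cites an approximation theorem; mollification would also do) and sets $\Omega_k=\{f_k>1/k\}$. Your argument is easily repaired along the same lines: replace $d$ by $d_k$, drop the quadratic term, and your Sard/strict-concavity reasoning for smoothness goes through (or one can add $-\epsilon|x|^2$ with $\epsilon$ genuinely tiny just to force strong concavity).
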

\begin{proof}
Set $U_k=B_k\cap \Omega$ and $d_k(x):=\dist(x,\partial U_k)$. {We note that $d_k\leq d_{k+1}$, $U_k$ is a convex domain, and $d_k$ is a convex function.} By \cite[Theorem 1]{AD}, there is a smooth convex function $f_k: U_k\to \mathbf{R}$, such that for any $x\in U_k$, $$d_k(x)-\frac{1}{k^2}\leq f_k(x) \leq d_k(x).$$ 
Let $$\Omega_k:=\left\{x\in U_k: f_k(x)>\frac{1}{k}\right\}.$$We note $\Omega_k$ is a convex subset of $U_k$, and without loss of generality we can assume it is also smooth by Sard's theorem. Let us show $\{\Omega_k\}_{k\geq 1}$ forms a exhaustion of $\Omega$. For any $x\in\Omega$, there is a $k$ large enough such that $x\in U_k$ and $d_k(x)>2/k$. Then we have $f_k(x)>1/k$ and $x\in\Omega_k$. Therefore, $\bigcup_{k\geq1} \Omega_k=\Omega$. Moreover, for any $x\in \Omega_k$, we have 
\begin{align*}f_{k+1}(x)\geq d_{k+1}(x)-\frac{1}{(k+1)^2}&\geq d_k(x)-\frac{1}{(k+1)^2}\\&\geq f_k(x)-\frac{1}{(k+1)^2}>\frac{1}{k+1}.
\end{align*}
As a consequence, $x\in \Omega_{k+1}$ and then we have $ \Omega_{k}\subset \Omega_{k+1}$.
\end{proof}

\subsection{Proof of existence} Here we present the proof of the existence of \eqref{Eq: MSE}-\eqref{Eq: boundary value}. We gives the detail for the case when $\Omega\subsetneq\mathbf{R}^n$ is an unbounded convex domain but not a half space, and we omit the half space case since it is similar.
\begin{proposition}\label{exhaustion}
If $\Omega\subsetneq\mathbf{R}^n$ is a convex domain but not a half space, then there is a solution $u$ of \eqref{Eq: MSE}-\eqref{Eq: boundary value} with
$$
\|u-l\|_{C^0(\Omega)}\leq \|\phi\|_{C^0(\mathbf{R}^n)}.
$$
\end{proposition}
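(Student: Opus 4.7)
The plan is to obtain $u$ as a $C^{\infty}_{\mathrm{loc}}$-limit of classical solutions on the bounded smooth convex domains $\Omega_k$ provided by Proposition \ref{exhaust}. On each $\Omega_k$, convexity ensures that the boundary has nonnegative mean curvature, so the classical Jenkins--Serrin type existence theorem for the minimal surface equation (available in the Gilbarg--Trudinger framework) produces a unique solution $u_k\in C^{\infty}(\Omega_k)\cap C^0(\overline{\Omega_k})$ of
$$
\mathcal M u_k=0\ \text{in}\ \Omega_k,\qquad u_k=l+\phi\ \text{on}\ \partial\Omega_k.
$$
Because the affine functions $l\pm\|\phi\|_{C^0(\mathbf R^n)}$ are themselves solutions of $\mathcal M=0$ and sandwich the boundary data on $\partial\Omega_k$, the comparison principle for the minimal surface equation on $\Omega_k$ gives at once the uniform estimate
$$
\|u_k-l\|_{C^0(\overline{\Omega_k})}\leq \|\phi\|_{C^0(\mathbf R^n)}.
$$

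To pass to the limit, fix a compact $K\Subset\Omega$; for all sufficiently large $k$ one has $K\subset\Omega_k$ and $\|u_k\|_{C^0(K)}\leq \sup_K|l|+\|\phi\|_{C^0(\mathbf R^n)}$. The interior gradient estimate of Bombieri--De Giorgi--Miranda then yields a uniform bound on $|\nabla u_k|$ over $K$, and Schauder theory upgrades this to uniform $C^{2,\alpha}$ bounds on any compactly contained subdomain. A standard diagonal extraction produces a subsequence converging in $C^{\infty}_{\mathrm{loc}}(\Omega)$ to some $u\in C^{\infty}(\Omega)$ with $\mathcal M u=0$ and $\|u-l\|_{C^0(\Omega)}\leq \|\phi\|_{C^0(\mathbf R^n)}$.

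The remaining and genuinely delicate point is boundary attainment, i.e.\ verifying that $\lim_{x\to x_0}u(x)=l(x_0)+\phi(x_0)$ at every $x_0\in\partial\Omega$. I would handle this by a local barrier argument: at each $x_0\in\partial\Omega$, convexity provides a supporting hyperplane, and one can construct on $\Omega\cap B_r(x_0)$ an upper barrier $w^+$ and lower barrier $w^-$ satisfying $\mathcal M w^+\leq 0$, $\mathcal M w^-\geq 0$, agreeing with $l(x_0)+\phi(x_0)$ at $x_0$, dominating (respectively dominated by) $l+\phi$ on $\partial\Omega\cap B_r(x_0)$, and arranged to dominate (respectively be dominated by) $u_k$ on $\Omega\cap\partial B_r(x_0)$ using the already established uniform bound $\|u_k-l\|_\infty\leq \|\phi\|_\infty$. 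Here the uniform continuity of $\phi$ is decisive: it supplies a common modulus of continuity that enables the barrier heights near $x_0$ to be tuned so that they dominate $l+\phi$ on $\partial\Omega\cap B_r(x_0)$ while collapsing to $l(x_0)+\phi(x_0)$ at $x_0$. Once such barriers are in place, comparison on $\Omega_k\cap B_r(x_0)$ gives $w^-\leq u_k\leq w^+$ there, and sending $k\to\infty$ yields $w^-\leq u\leq w^+$ near $x_0$, whence $\limsup_{x\to x_0}|u(x)-l(x_0)-\phi(x_0)|=0$.

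I expect the barrier construction to be the main obstacle. The bounded pieces $\Omega_k$ are smooth, but $\partial\Omega$ itself is only convex and may be rough (for example, not $C^1$), so the barriers must be produced from convexity and the uniform modulus of continuity of $\phi$ alone, with no use of any regularity or curvature information about $\partial\Omega$; the standard route is to take an affine function plus a concave function of the signed distance to the supporting hyperplane, with coefficients calibrated against the modulus of continuity of $\phi$ and the global bound $\|\phi\|_\infty$.
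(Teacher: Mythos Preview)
Your argument is correct and matches the paper for the construction of the $u_k$, the uniform $C^0$ bound via comparison with $l\pm\|\phi\|_{C^0}$, the interior gradient/Schauder estimates, and the diagonal extraction.

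The one point of genuine divergence is the boundary attainment. The paper does not build barriers at $x_0\in\partial\Omega$; instead it picks $x_k\in\partial\Omega_k$ with $x_k\to x_0$ and invokes a uniform modulus-of-continuity estimate for solutions of the minimal surface equation on bounded convex domains (Theorem~3.2.3 in Han) to control $|u_k(x)-u_k(x_k)|$, then combines this with $u_k(x_k)=l(x_k)+\phi(x_k)$, the uniform continuity of $l+\phi$, and the local convergence $u_k\to u$. Your barrier route is also valid --- indeed it is essentially what the paper carries out later in Lemma~\ref{uni-continuous}, where a solution of $\mathcal M w_\tau=0$ in a half-ball with explicit quadratic boundary data serves as the barrier. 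One small point to tighten in your sketch: the comparison takes place on $\Omega_k\cap B_r(x_0)$, whose boundary meets $\partial\Omega_k$ rather than $\partial\Omega$, so the barriers must dominate $l+\phi$ on all of $B_r(x_0)$ (which the uniform continuity of $l+\phi$ delivers), not merely on $\partial\Omega\cap B_r(x_0)$. With that adjustment your approach goes through and has the advantage of being self-contained, whereas the paper's version is shorter by outsourcing the boundary regularity to a citation.
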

\begin{proof}
Let  $\{\Omega_k\}$ be the exhaustion constructed in Proposition \ref{exhaust}, we consider Dirichlet problems:
$$\mathcal Mu_k=0 \mbox{ in } \Omega_k;\,\, u_k=l+\phi \mbox{ on } \partial \Omega_k. $$
By the maximum principle,  we have for any $k>0$ and $x\in \Omega_k$,
$$l(x)-\|\phi\|_{C^0(\mathbf{R}^n)}\leq u_k(x)\leq l(x)+\|\phi\|_{C^0(\mathbf{R}^n)}.$$
For any $\Omega'\subset\subset \Omega_k$, by the interior gradient estimate of the minimal surface equation (see \cite{GT, Han}), and $ \mathop{\mathrm {osc}}\limits_{\Omega_k}u_k\leq 2\|\phi\|_{C^0(\mathbf{R}^n)}$, we have $$\sup_{x\in\Omega'}|\nabla u_k(x)|\leq C(n, \|\phi\|_{C^0(\mathbf{R}^n)},\dist(\Omega', \partial \Omega)).$$
Fixing any $\Omega'\subset\subset \Omega$, we choose $k_0>1$ large enough such that $\Omega'\subset\subset\Omega_{k_0}$, then $\{u_k\}_{k\geq k_0}$ is well defined in $\Omega'$. By the Schauder estimate to $u_k-l$, we have $$\|u_k-l\|_{C^{2,\alpha}(\Omega')}<C(n, \|\phi\|_{C^0(\mathbf{R}^n)}, \dist(\Omega', \partial \Omega)).$$
Hence after pass to a subsequence, $\{u_k\}_{k\geq 1}$ converges to a $C^2(\Omega)$ function and we denote it by $u$. It is obvious that $\mathcal Mu=0$ in $\Omega$ and $$
\|u-l\|_{C^0(\Omega)}\leq \|\phi\|_{C^0(\mathbf{R}^n)}.
$$
Fix any $x_0\in \partial \Omega$.  For any $\varepsilon>0$, there is a constant $\delta>0$, such that, for any $x\in B_{\delta}(x_0)\cap\overline\Omega$, $$|l(x)-l(x_0)|+|\phi(x)-\phi(x_0)|<\varepsilon,$$ and we fix such a point $x$. 

Take $\{x_k\}_{k\geq 1}$ with $x_k\in \partial \Omega_k$ and $x_k\to x_0$ as $k\to\infty$. Note there is $k_1>1$ large enough such that {for all $k\geq k_1$, we have $|x_k-x_0|<\delta$, $x\in\Omega_k$ and $$|u(x)-u_k(x)|<\varepsilon.$$}By the estimate of the modulus of continuity of solutions for the minimal surface equation in bounded domain (see \cite[Theorem 3.2.3]{Han}), we can assume $$|u_k(x)-l(x_k)-\phi(x_k)|<\varepsilon.$$ Then,
\begin{align*}
|u(x)-l(x_0)-\phi(x_0)|<&|u(x)-u_k(x)|+|u_k(x)-l(x_k)-\phi(x_k)|\\&+|l(x_k)+\phi(x_k)-l(x_0)-\phi(x_0)|\\<&3\varepsilon.
\end{align*}
Hence, $\lim\limits_{x\to x_0, x\in\Omega}u(x)=l(x_0)+\phi(x_0).$
\end{proof}

With a similar argument we can show

\begin{proposition}\label{existhalf}
If $\Omega$ is the half space $\mathbf{R}_+^n:=\{x\in \mathbf{R}^n: x_n>0\}$, then for any real constant $c$ there is a solution $u_c$ of \eqref{Eq: MSE}-\eqref{Eq: boundary value} with
\begin{equation*}
\|u_c-l-cx_n\|_{C^0(\Omega)}\leq \|\phi\|_{C^0(\mathbf{R}^n)}.
\end{equation*}
\end{proposition}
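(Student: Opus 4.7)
The plan is to follow the exhaustion argument of Proposition~\ref{exhaustion} almost verbatim, but with the linear function $l$ replaced throughout by the shifted linear function $\tilde l(x):=l(x)+c x_n$. Since $\mathbf R_+^n$ is an unbounded convex domain, Proposition~\ref{exhaust} supplies a smooth convex exhaustion $\{\Omega_k\}$ with $\Omega_k\subset B_k\cap\mathbf R_+^n$. On each $\Omega_k$ I would solve the Dirichlet problem
\[
\mathcal M u_k=0 \text{ in }\Omega_k,\qquad u_k=\tilde l+\phi \text{ on }\partial\Omega_k.
\]
The boundary data $\tilde l+\phi$ is bounded uniformly continuous because $\tilde l$ is Lipschitz and $\phi$ is bounded uniformly continuous; on the flat portion $\partial\mathbf R_+^n\cap\partial\Omega_k$ one has $x_n=0$, so $\tilde l+\phi=l+\phi$, matching \eqref{Eq: boundary value}.

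Because $\tilde l$ is linear one has $\mathcal M\tilde l=0$, so comparing $u_k$ with $\tilde l\pm\|\phi\|_{C^0(\mathbf R^n)}$ by the maximum principle on the bounded domain $\Omega_k$ yields the uniform bound
\[
\|u_k-\tilde l\|_{C^0(\Omega_k)}\leq \|\phi\|_{C^0(\mathbf R^n)}.
\]
From here the interior gradient estimate and Schauder theory give $k$-independent $C^{2,\alpha}$ bounds for $u_k-\tilde l$ on any relatively compact $\Omega'\subset\subset\mathbf R_+^n$, since the oscillation of $u_k$ on $\Omega'$ is controlled by $2\|\phi\|_{C^0(\mathbf R^n)}+\Osc_{\Omega'}\tilde l$, a quantity depending only on $c$, $l$ and $\Omega'$. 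A diagonal subsequence then produces a $C^2(\mathbf R_+^n)$ limit $u_c$ with $\mathcal M u_c=0$ satisfying the stated global estimate.

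The step I would flag as the main technical point is the boundary-value attainment at $x_0\in\partial\mathbf R_+^n$. I would pick approximating points $x_k\in\partial\Omega_k$ with $x_k\to x_0$; since the $\Omega_k$ are built as smooth convex approximations of $U_k=B_k\cap\mathbf R_+^n$ lying at distance at most $O(1/k)$ from $\partial U_k$, one can arrange $(x_k)_n\to 0$, whence $\tilde l(x_k)\to \tilde l(x_0)=l(x_0)$. Combining this with the modulus-of-continuity estimate \cite[Theorem 3.2.3]{Han} applied on each $\Omega_k$ and the three-term triangle inequality used in Proposition~\ref{exhaustion} gives $\lim_{x\to x_0,\,x\in\mathbf R_+^n}u_c(x)=l(x_0)+\phi(x_0)$, completing the proof. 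Aside from this bookkeeping at the boundary, the argument is a direct transcription of the previous proposition with $l$ replaced by $\tilde l$.
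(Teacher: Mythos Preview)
Your proposal is correct and is precisely the argument the paper has in mind: the authors write ``With a similar argument we can show'' Proposition~\ref{existhalf}, and your substitution $l\rightsquigarrow \tilde l=l+cx_n$ in the proof of Proposition~\ref{exhaustion} is exactly that similar argument. One small imprecision: by construction $\Omega_k$ lies strictly inside $B_k\cap\mathbf R_+^n$, so there is no ``flat portion $\partial\mathbf R_+^n\cap\partial\Omega_k$''; but this aside is not load-bearing, since your final paragraph correctly recovers the boundary value via $x_k\in\partial\Omega_k$, $(x_k)_n\to 0$, and the modulus-of-continuity estimate.
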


\section{Linear theory: Decay or blow up alternative}\label{decayblowup}
In this section, we will consider the following equation
\begin{equation}\label{Eq: linear equation}
\mathcal Lu:=\Div(A(x)\nabla u)=0 \mbox{ in } \Omega
\end{equation}
with the condition
\begin{equation}\label{Eq: positive solution}
u=0\mbox{ on }\partial\Omega, \mbox{ and } u>0 \mbox{ in } \Omega,
\end{equation}
where
$$
{A(x)\in L^\infty(\Omega), \lambda I\leq A\leq \lambda^{-1}I,  \mbox{ and } \lambda\in(0, 1) \mbox{ is a constant.}}
$$

First we mention that \eqref{Eq: linear equation}-\eqref{Eq: positive solution} has been studied in special cases when the domain $\Omega$ is an infinite cone or an infinite cylinder (see \cite{LN85} and \cite{BWZ16} respectively). In both cases, it was proved that the dimension of the space consisting of all solutions is determined by the number of ends of the underlying domain $\Omega$. This philosophy, however, remains to be an open problem when $\Omega$ turns out to be a general unbounded domain. One cannot apply those methods from \cite{LN85, BWZ16} to general cases. Indeed, their arguments rely heavily on the scaling or translating invariance of the underlying domain $\Omega$, which guarantees that a Harnack inequality with a uniform constant even holds around the infinity. Readers can turn to our brief discussion in Appendix \ref{Sec: A} for a quick feeling.

We are more concerned with the blow up phenomenon of the solution of \eqref{Eq: linear equation}-\eqref{Eq: positive solution}, which is closely related to our work. For solutions to the equation \eqref{Eq: linear equation} in exterior domains, Moser \cite{Moser1961} established the H\"older decay or blow up alternative on the oscillation as an application of the Harnack inequality (with a uniform constant around infinity). From the same technique, such alternative can be established for solutions to \eqref{Eq: linear equation}-\eqref{Eq: positive solution} (see Lemma \ref{Lem: alternative}) by the application of boundary Harnack inequality when the domain $\Omega$ is an infinite cone or an infinite cylinder.

Here we would like to deal with more general case, where the domain $\Omega$ satisfies the {\it exterior cone property}, i.e. there is an infinite cone outside $\Omega$. Given an unbounded domain $\Omega$, we use $\Omega_{ext}$ and $(\partial\Omega)_{ext}$ to denote $\Omega-K$ and $\partial\Omega-K$ for some fixed compact set $K$. The main result in this section is the following theorem.
\begin{theorem}\label{Thm: alternative}
Let $\Omega$ be an unbounded domain satisfying the exterior cone property and $u\in W^{2,p}_{loc}(\Omega_{ext})\cap C^0(\bar\Omega_{ext})$ be a solution of the equation \eqref{Eq: linear equation} in $\Omega_{ext}$ with $u=0$ on $(\partial\Omega)_{ext}$ and $u>0$ in $\Omega_{ext}$. Then the function
$$
\mathrm{osc}(r)=\max_{\Omega\cap \partial B_r} u
$$
has a limit as $r\to+\infty$, and the limit is either $+\infty$ or $0$. 
\end{theorem}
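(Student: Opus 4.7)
My plan is to establish separately that $\mathrm{osc}(r)$ admits a limit in $[0,+\infty]$ and that this limit cannot be a finite positive number. Write $M(r):=\mathrm{osc}(r)$ throughout.

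\textbf{Existence of the limit via the maximum principle.} Because $u$ vanishes on $(\partial\Omega)_{ext}$ and is positive on $\Omega_{ext}$, the maximum principle applied on the annular region $\Omega_{ext}\cap(B_R\setminus B_r)$ with $r$ large enough that $K\subset B_r$ and $M(r)>\max_{\partial K\cap\Omega}u$ shows
\[
M(s)\;\leq\;\max\{M(r),M(R)\}\qquad\text{whenever }r<s<R.
\]
This quasi-convexity rules out $\liminf_{r\to\infty}M<\limsup_{r\to\infty}M$: otherwise, fixing an intermediate $\mu$ and selecting a sequence $a_k<b_k<a_{k+1}\to\infty$ with $M(a_k),M(a_{k+1})<\mu$ but $M(b_k)>\mu$ immediately contradicts the displayed inequality. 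Hence $M$ has a well-defined limit in $[0,+\infty]$.

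\textbf{Ruling out a finite positive limit.} Suppose toward contradiction that $M(r)\to L\in(0,+\infty)$, so $u\leq L$ throughout $\Omega_{ext}$ outside a sufficiently large ball. Extend $u$ by zero across $\partial\Omega$ to obtain $\bar u$ on $\mathbf R^n\setminus K$; a test-function computation, using $u=0$ on $\partial\Omega$ and the resulting one-sided sign $A\nabla u\cdot\nu\leq 0$ on $\partial\Omega$, verifies $\bar u$ is a non-negative subsolution of a uniformly elliptic divergence-form operator (after extending $A$ to all of $\mathbf R^n$ in a uniformly elliptic manner). Consequently $L-\bar u\geq 0$ is a non-negative supersolution on $\mathbf R^n\setminus K$. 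The key geometric input is that the global infinite cone $C\subset\mathbf R^n\setminus\Omega$ satisfies the scale-invariant density $|C\cap B_R(0)|\geq\delta|B_R(0)|$ for all $R>0$ with a fixed $\delta>0$, while $L-\bar u\equiv L$ on $C$. Applying the weak Harnack inequality to $L-\bar u$ on balls of radius comparable to $R$ situated within $\mathbf R^n\setminus B_{R_0}$ (with $R_0$ chosen so that $K\subset B_{R_0}$), together with a standard chaining argument to propagate the local lower bound across $\Omega_{ext}\setminus B_{R_0}$, I expect to deduce
\[
\sup_{\Omega_{ext}\setminus B_{R_0}}u\;\leq\;(1-c)L
\]
with $c=c(\delta,\lambda,n)>0$ independent of $R_0$ (provided $R_0$ is large enough). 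Since $M(r)\leq \sup_{\Omega_{ext}\setminus B_{R_0}}u$ for all $r>R_0$, letting $r\to\infty$ would give $L\leq(1-c)L$, contradicting $L>0$.

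\textbf{Main obstacle.} The delicate part is the quantitative implementation of the weak Harnack step. One must engineer balls (or a chain of balls) that simultaneously avoid the compact hole $K$, intersect the exterior cone $C$ in a proportion bounded away from zero uniformly in scale, and chain together to cover all of $\Omega_{ext}\setminus B_{R_0}$ with uniformly controlled constants. The scale invariance of the genuine infinite cone $C$ is what guarantees the density constant $\delta$ and the resulting improvement $c$ can be chosen independently of scale; this uniformity is precisely the essential use of the exterior cone hypothesis, and sorting it out carefully (rather than relying on a merely local exterior cone at each boundary point, which the hypothesis does not provide) is where the work lies.
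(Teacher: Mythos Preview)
Your strategy is sound and takes a genuinely different route from the paper's. After the common observation that $\mathrm{osc}(r)$ is eventually monotone, the paper treats the increasing and decreasing cases separately by building auxiliary comparison solutions in an \emph{enclosing} infinite cone---drawing on Theorem~\ref{Thm: positive solution in cone}, Lemma~\ref{Lem: alternative}, and ultimately Proposition~\ref{Prop: unbounded estimate}---to force a contradiction. You instead extend $u$ by zero across $\partial\Omega$ to a subsolution on $\mathbf R^n\setminus K$ and exploit the scale-invariant volume density of the exterior cone through the weak Harnack inequality, chained over $O(1)$ balls of radius comparable to $r$ along $\partial B_r$, to obtain a uniform drop $M(r)\le(1-c)L$. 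Your argument is more self-contained, avoiding the cone PDE theory entirely in favor of De Giorgi--Nash--Moser estimates; the paper's is more modular, recycling Proposition~\ref{Prop: unbounded estimate} as a black box.

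Two technical points deserve care. First, your claim that $\bar u$ is a subsolution invokes the sign of $A\nabla u\cdot\nu$ on $\partial\Omega$, but the hypotheses impose \emph{no} regularity on $\partial\Omega$ beyond the existence of one global exterior cone, so neither $\nu$ nor a boundary trace of $\nabla u$ is available in general. This is repaired by working with $(u-\varepsilon)^+$, whose support (intersected with any ball) lies compactly in $\Omega$ by continuity of $u$; its zero extension is then trivially $W^{1,2}_{\mathrm{loc}}$ and a subsolution as a maximum of subsolutions, and one lets $\varepsilon\downarrow 0$ at the end. Second, the blanket bound $u\le L$ outside a large ball only holds when $M$ is increasing to $L$; in general you get $u\le L+\varepsilon$ outside $B_{R_0(\varepsilon)}$, which still yields $\limsup M(r)\le(1-c)L$ after sending $\varepsilon\to 0$. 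The chaining you flag as the main obstacle is indeed not literal Harnack chaining (since $L-\bar u$ is merely a supersolution), but iterating weak Harnack over a bounded number of overlapping balls at each scale does the job with a scale-independent constant.
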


As a preparation, we have
\begin{proposition}\label{Prop: unbounded estimate}
If $\Omega$ is an unbounded domain satisfies the exterior cone property, then any solution of \eqref{Eq: linear equation}-\eqref{Eq: positive solution} must be unbounded.
\end{proposition}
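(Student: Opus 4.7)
I would argue by contradiction: suppose $u$ is bounded and set $M := \sup_\Omega u$; by positivity we have $M > 0$. The plan is to exploit the size of the exterior cone $C \subset \Omega^c$, which occupies a definite fraction of every large ball centered at the origin, together with Moser's weak Harnack inequality applied to $M - u$ (extended suitably to $\mathbf{R}^n$), to conclude that $\sup_{\Omega \cap B_R(0)} u < M$ for every sufficiently large $R$. Letting $R \to \infty$ then contradicts the definition of $M$.

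\textbf{Extension step.} First I would pass from $u$ on $\Omega$ to a function on all of $\mathbf{R}^n$. Define $\tilde u := u \mathbf{1}_\Omega$ by zero extension, and extend $A$ arbitrarily outside $\Omega$ while preserving the ellipticity bounds, obtaining an operator $\tilde{\mathcal L} := \Div(\tilde A \nabla \cdot)$ on $\mathbf{R}^n$. Using $u \ge 0$ and the vanishing trace $u|_{\partial \Omega} = 0$, one verifies that $\tilde u \in H^1_{loc}(\mathbf{R}^n)$ is a weak sub-solution of $\tilde{\mathcal L}$ on all of $\mathbf{R}^n$. Equivalently, $\tilde v := M - \tilde u$ is a non-negative weak super-solution of $\tilde{\mathcal L}$ on $\mathbf{R}^n$ satisfying $\tilde v \equiv M$ on $\mathbf{R}^n \setminus \Omega$, and in particular on $C$.

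\textbf{Density of $C$ and weak Harnack.} Since $C$ is an infinite cone of positive solid angle, there exist $\sigma > 0$ and $R_0 > 0$ with $|B_{2R}(0) \cap C| \ge \sigma |B_{2R}(0)|$ for every $R \ge R_0$. Applying Moser's weak Harnack inequality to the non-negative super-solution $\tilde v$ on $B_{4R}(0)$ yields
\[
\left(\fint_{B_{2R}(0)} \tilde v^p\right)^{1/p} \le C_H \inf_{B_R(0)} \tilde v,
\]
for some $p > 0$ and $C_H > 0$ depending only on $n$ and $\lambda$. Combining the pointwise bound $\tilde v \ge M \mathbf{1}_C$ with the density estimate gives $\fint_{B_{2R}(0)} \tilde v^p \ge \sigma M^p$, and hence $\inf_{B_R(0)} \tilde v \ge \gamma M$, where $\gamma := C_H^{-1} \sigma^{1/p} > 0$. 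In terms of $u$ this reads $\sup_{\Omega \cap B_R(0)} u \le (1 - \gamma) M$ for all $R \ge R_0$, and sending $R \to \infty$ gives the contradiction $\sup_\Omega u \le (1-\gamma) M < M$.

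\textbf{Main obstacle.} The delicate point is the global sub-solution property of $\tilde u$. Because $\Omega$ is only assumed to satisfy the exterior cone property, $\partial \Omega$ need not be regular and standard trace theorems do not apply. I would approximate a non-negative test function $\phi \in C_c^\infty(\mathbf{R}^n)$ by $(1 - \eta_k) \phi \in H^1_0(\Omega)$, with cutoffs $\eta_k$ concentrating on $\partial \Omega$, use $\mathcal L u = 0$ to discard the interior contribution, and extract from the limiting boundary-layer term a non-positive sign from $u \ge 0$ via a distributional Hopf-type argument at the boundary minimum. Once this sub-solution property is in place, the remainder is a routine combination of weak Harnack and the positive density of $C$ at infinity.
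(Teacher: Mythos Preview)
Your route is genuinely different from the paper's. The paper never extends $u$; instead it encloses $\Omega$ in nested infinite cones $\mathcal C_1\subset\mathcal C_2$, extends the coefficients by the identity on $\mathcal C_2\setminus\Omega$, and uses the appendix (Theorem~A.2) to produce a positive solution $w$ of $\Div(\bar A\nabla w)=0$ in $\mathcal C_2$ vanishing on $\partial\mathcal C_2$. A further comparison with harmonic functions in the shell $\mathcal C_2\setminus\bar{\mathcal C}_1$ (Corollary~A.3, Corollary~A.6) forces $\inf_{\mathcal C_1\cap S_r}w\to\infty$, after which the maximum principle gives $u\le\epsilon w$ for every $\epsilon>0$ and hence $u\equiv 0$. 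This avoids any contact with the regularity of $\partial\Omega$, at the cost of invoking the cone machinery. Your weak-Harnack argument is more self-contained, but it does have to confront the boundary.

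That confrontation is where the gap lies. In this paper the \emph{exterior cone property} means only that a single infinite cone sits in $\Omega^c$; it says nothing about local regularity of $\partial\Omega$. Consequently there is no reason for $\tilde u=u\mathbf 1_\Omega$ to lie in $H^1_{loc}(\mathbf R^n)$, which is a prerequisite for the weak-supersolution inequality you feed into Moser. Your proposed fix---cutoffs $(1-\eta_k)\phi$ plus a ``distributional Hopf-type argument''---presupposes exactly the boundary $L^2$ control on $\nabla u$ that is unavailable, and the Hopf step is not a standard device here.

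The gap is easily closed: work with $\tilde u_\delta:=(u-\delta)^+\mathbf 1_\Omega$ instead. On the support of any test function, $\{u\ge\delta\}$ stays at positive distance from $\partial\Omega$ by continuity and $u|_{\partial\Omega}=0$, so $\tilde u_\delta\in H^1_{loc}(\mathbf R^n)$ by interior regularity alone, and it is a genuine weak subsolution (the maximum of the solution $u-\delta$ with $0$, extended by zero across a region where it already vanishes). Your weak-Harnack step then yields $\sup_{B_R}\tilde u_\delta\le(1-\gamma)(M-\delta)$ uniformly in $R$; letting $R\to\infty$ and then $\delta\to 0$ gives $M\le(1-\gamma)M$, the desired contradiction. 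With this modification your argument is complete and, compared to the paper's, trades the appendix cone theory for a single application of weak Harnack.
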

\begin{figure}[htbp]
\centering
\includegraphics[width=7cm]{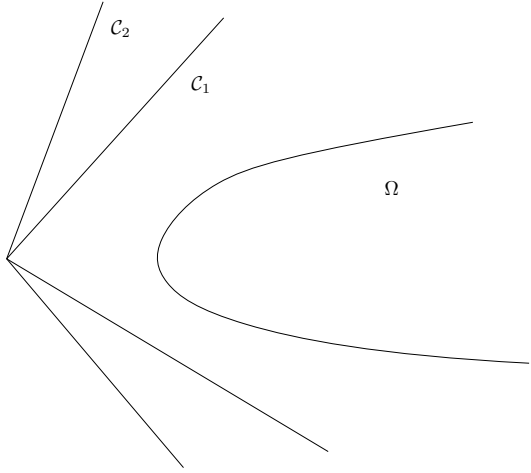}
\caption{Extend domain $\Omega$ to infinite cones}
\label{Fig: 1}
\end{figure}

\begin{proof}
As shown in Figure \ref{Fig: 1}, we can extend the domain $\Omega$ to infinite cones $\mathcal C_1$ and $\mathcal C_2$ from the exterior cone property. Let us define
$$
\bar A(x)=\left\{
\begin{array}{cc}
A(x),&x\in\Omega;\\
I,&x\in \mathcal C_2-\Omega,
\end{array}\right.
$$
and consider \eqref{Eq: linear equation}-\eqref{Eq: positive solution} in the infinite cone $\mathcal C_2$. From Theorem \ref{Thm: positive solution in cone} we can obtain a positive solution $w$ and we would like to show the following estimate
$$
m(r):=\inf_{\mathcal C_1\cap S_r}w\to +\infty,\mbox{ as } r\to+\infty,
$$
where $S_r$ is the sphere with radius $r$ centered at the pole of cone $\mathcal C_1$. Otherwise, the restriction $w|_{\partial\mathcal C_1}$ is a bounded continuous function on $\partial\mathcal C_1$. From the standard exhaustion method, it is not difficult to construct a bounded harmonic function $v$ in $\mathcal C_2-{\bar{\mathcal C}}_1$ such that $v=0$ on $\partial\mathcal C_2$ and $v=w|_{\partial\mathcal C_1}$ on $\partial\mathcal C_1$ as well as $v\leq w$. From the uniqueness and Corollary \ref{Cor: unbounded} the function $w$ is unbounded in $\mathcal C_2-\bar{\mathcal C}_1$ and so $w-v$ is a positive harmonic function in
$\mathcal C_2-\bar{\mathcal C}_1$ with vanishing boundary value. It follows from Corollary \ref{Cor: harmonic in cone} that
$$
w-v=cr^\beta\phi_1
$$
for some positive constants $c$ and $\beta$. The Harnack inequality then yields $m(r)\to +\infty$ as $r\to+\infty$, which leads to a contradiction.

Now we can use $w$ as a comparison function to deduce a contradiction under the assumption that there is a bounded solution $u$ of \eqref{Eq: linear equation}-\eqref{Eq: positive solution}. From the maximum principle it is easy to see $u\leq \epsilon w$ for any $\epsilon >0$. However, this implies $u\equiv 0$ which is impossible.
\end{proof}

Now let us prove Theorem \ref{Thm: alternative}.

\begin{proof}[Proof for Theorem \ref{Thm: alternative}]
By the maximum principle, we see that the function $\Osc(r)$ is monotone when $r$ is large enough. There are two possibilities:

{\it Case 1. $\Osc(r)$ is monotone increasing when $r\geq r_0$.} Let us deuce a contradiction when $\Osc(r)$ is uniformly bounded from above. As in the proof of Proposition \ref{Prop: unbounded estimate}, we can construct a comparison function $w$ such that
$$
u\leq \Osc(r_0)+\epsilon w \mbox{ in }\Omega-B_{r_0}
$$
for any $\epsilon>0$. As a result, $\Osc(r)\equiv \Osc(r_0)$ when $r\geq r_0$. The strong maximum principle yields that $u$ is a constant function, which is impossible. So we have $\Osc(r)\to +\infty$ as $r\to+\infty$.

{\it Case 2. $\Osc(r)$ is monotone decreasing when $r\geq r_0$.} In this case, we still adopt the contradiction argument and assume that $\Osc(r)$ converges to a positive constant as $r$ tends to infinity. Extend the domain $\Omega$ to a infinite cone $\mathcal C$ and define
$$
\bar A(x)=\left\{
\begin{array}{cc}
A(x),&x\in\Omega;\\
I,&x\in \mathcal C-\Omega.
\end{array}\right.
$$
Let $\phi$ be a nonnegative function on $\partial\mathcal C$ with compact support which is positive somewhere. Then we can solve a bounded positive function $v$ such that
\begin{equation*}
\Div(\bar A(x)\nabla v)=0 \mbox{ in } \mathcal C, \mbox{ and } v=\phi \mbox{ on } \partial \mathcal C.
\end{equation*}
It follows from the decay or blow up alternative (Lemma \ref{Lem: alternative}) that
$$
\lim_{|x|\to +\infty} v(x)=0.
$$
Up to a scaling we can assume $u<v$ on $\partial(\Omega-B_{r_0})$. Let us consider the function $w=u-v$ in the domain
$$
\Omega_+:=\{u-v>0\}\subset \Omega.
$$
Note $\Omega_+$ must be non-empty and unbounded since $\Osc(r)$ has a positive limit. And $w$ is clearly a bounded positive function satisfying \eqref{Eq: linear equation}-\eqref{Eq: positive solution} in $\Omega_+$. 
However, since $\Omega_+$ satisfies the exterior cone property we know $w$ must be unbounded by Proposition \ref{Prop: unbounded estimate}. This is a contradiction and we complete the proof.
\end{proof}

\section{Uniqueness}\label{unique}
We will apply the following uniform continuity lemma and Proposition \ref{Prop: unbounded estimate} to prove Proposition \ref{Prop: almost linear comparison theorem}.
\begin{lemma}\label{uni-continuous}
If $\Omega$ is a convex domain and $u$ is a solution of \eqref{Eq: MSE}-\eqref{Eq: boundary value} satisfying
$$
\|u-l\|_{C^0(\Omega)}<+\infty,
$$ then $u$ is uniformly continuous in $\bar\Omega$.
\end{lemma}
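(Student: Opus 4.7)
The approach has three stages: an interior Lipschitz control, a uniform boundary modulus of continuity, and a patching argument.

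Write $M := \|u - l\|_{C^0(\Omega)}$, let $L$ denote the Lipschitz constant of $l$, and let $\omega_\phi$ be the uniform modulus of continuity of $\phi$.

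\emph{Interior Lipschitz.} Since $u - l$ is globally bounded by $M$, on any ball $B_R(x) \subset \Omega$ one has $\operatorname{osc}_{B_R(x)} u \leq 2M + 2LR$. The classical interior gradient estimate for the minimal surface equation (\cite{GT,Han}) then bounds $|\nabla u(x)|$ by a constant $C(d^*) = C(n, M, L, d^*)$ whenever $d(x, \partial\Omega) \geq d^*$, for any $d^* > 0$. Using concavity of $d(\cdot, \partial\Omega)$ on the convex set $\Omega$ to keep line segments within $\{d \geq d^*\}$, this upgrades to a $C(d^*)$-Lipschitz bound on $\{x \in \Omega : d(x, \partial\Omega) \geq d^*\}$.

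\emph{Uniform boundary modulus.} The aim is a modulus $\omega^*$ with $\omega^*(0^+) = 0$, independent of the boundary point, such that $|u(x) - u(x_0)| \leq \omega^*(|x - x_0|)$ for every $x_0 \in \partial\Omega$ and $x \in \Omega \cap B_{1/2}(x_0)$. I would work with the smooth convex exhaustion $\{\Omega_k\}$ of Proposition \ref{exhaust} and the approximating solutions $u_k$ from Proposition \ref{exhaustion}, applying the boundary modulus of continuity estimate for the MSE in bounded convex domains (\cite[Theorem 3.2.3]{Han}) to $u_k - l(\xi)$ at each $\xi \in \partial\Omega_k$, restricted to $\Omega_k \cap B_1(\xi)$. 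The key observations are that convexity of $\Omega_k$ provides a supporting hyperplane at $\xi$ (hence a uniform half-space barrier), that $\|u_k - l(\xi)\|_{L^\infty(\Omega_k \cap B_1(\xi))} \leq M + L$, and that the boundary data $(l - l(\xi)) + \phi$ on $\partial\Omega_k \cap B_1(\xi)$ has modulus at most $\omega_0(r) := L r + \omega_\phi(r)$; all these inputs are uniform in both $\xi$ and $k$. Hence the resulting modulus $\omega^* = \omega^*(n, M, L, \omega_\phi)$ is also uniform. Then for any $x_0 \in \partial\Omega$, choose $x_k \in \partial\Omega_k$ with $x_k \to x_0$, apply the estimate at $x_k$, and pass to the limit (using $u_k \to u$ locally uniformly and $u_k(x_k) = (l+\phi)(x_k) \to (l+\phi)(x_0) = u(x_0)$) to recover the claimed bound.

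\emph{Patching.} Given $\varepsilon > 0$, pick $d^* \in (0, 1/6)$ with $2\omega^*(3 d^*) < \varepsilon/2$, and then set $\delta := \min(d^*, \varepsilon/(2 C(d^*)))$. For $x, y \in \bar\Omega$ with $|x - y| < \delta$: if both $d(x, \partial\Omega), d(y, \partial\Omega) \geq d^*$, interior Lipschitz gives $|u(x) - u(y)| \leq C(d^*)|x-y| < \varepsilon/2$; otherwise $d(x, \partial\Omega) < d^*$ (WLOG), and for $x_0 \in \partial\Omega$ nearest to $x$ one has $|x - x_0| < d^*$ and $|y - x_0| < 2 d^*$, so the uniform boundary modulus yields
\[
|u(x) - u(y)| \leq \omega^*(|x - x_0|) + \omega^*(|y - x_0|) \leq 2\omega^*(3 d^*) < \varepsilon/2.
\]
In either case $|u(x) - u(y)| < \varepsilon$.

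\emph{Main obstacle.} The hard part will be establishing a modulus $\omega^*$ in the boundary estimate that is uniform in $x_0$. Pointwise continuity at each boundary point is classical, but its uniformity demands that the controlling barriers depend only on data uniform across $\partial\Omega$ (namely dimension, the $L^\infty$ bound on $u - l(x_0)$ near $x_0$, and the boundary modulus of $\phi$). Convexity is precisely what delivers this: every boundary point enjoys a supporting hyperplane of identical geometric character, yielding uniform barrier control via the standard half-space barriers for the minimal surface equation.
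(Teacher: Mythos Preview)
Your overall architecture (interior gradient estimate, uniform boundary modulus, patching) matches the paper's implicit structure, and the interior and patching steps are fine. The problem is in how you obtain the uniform boundary modulus.

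You propose to get it by applying Han's boundary estimate to the approximating solutions $u_k$ from Proposition~\ref{exhaustion} and then passing to the limit via ``$u_k\to u$ locally uniformly''. But the sequence $u_k$ converges to the \emph{particular} solution constructed in Proposition~\ref{exhaustion}, whereas Lemma~\ref{uni-continuous} is stated for an \emph{arbitrary} solution $u$ of \eqref{Eq: MSE}--\eqref{Eq: boundary value} with $\|u-l\|_{C^0(\Omega)}<\infty$. At this point in the paper uniqueness has not yet been established---indeed, Lemma~\ref{uni-continuous} is a key ingredient in the proof of Proposition~\ref{Prop: almost linear comparison theorem}, from which uniqueness is then deduced. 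So invoking $u_k\to u$ for a general $u$ is circular. (Redefining $u_k$ as the solution in $\Omega_k$ with boundary data $u|_{\partial\Omega_k}$ does not help either: on the portion of $\partial\Omega_k$ lying inside $\Omega$ and approaching $\partial\Omega$, the data $u$ has no a~priori modulus---that is precisely what you are trying to prove.)

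The paper avoids this by working directly with the given $u$. At each $x_0\in\partial\Omega$ it uses the supporting hyperplane to place a half-ball $B_\delta^+$, and builds an explicit barrier $w_\delta$ solving $\mathcal M w_\delta=0$ in $B_1^+$ with quadratic boundary data $\eta_\tau=\tau^{-2}\bigl(\tau\operatorname{Lip}l+\|u-l\|_{C^0}+\|\phi\|_{C^0}\bigr)|x|^2$. The functions $v_\pm=(l+\phi)(x_0)\pm\epsilon\pm w_\delta$ then trap $u$ on $\partial(B_\delta\cap\Omega)$: on $\partial\Omega\cap B_\delta$ by the uniform continuity of $l+\phi$, and on $\Omega\cap\partial B_\delta$ by the $C^0$-bound on $u-l$. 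This produces a boundary modulus depending only on $n$, $\|u-l\|_{C^0}$, $\operatorname{Lip}l$, and the modulus of $\phi$, with no appeal to any approximating sequence. To fix your argument, replace the $u_k$-limit step by this direct barrier comparison applied to $u$ itself.
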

\begin{proof}
For any $\tau>0$, set $$\eta_\tau=\frac{\tau\lip l+\|u-l\|_{C^0(\Omega)}+\|\phi\|_{C^0(\Omega)}}{\tau^2}|x|^2,$$ where $\lip l$ is the Lipschitz constant of $l$. It is clear that $\eta_\tau$ is a subsolution of \eqref{Eq: MSE}.
Let us construct a suitable comparison function $w$ by solving the equation $\mathcal Mw_\tau=0$ in $B^+_1:=B_1\cap \mathbf R^n_+$ with the Dirichlet boundary value $w_\tau=\eta_\tau$ on $\partial B_1^+$. (Actually, we need to modify $B_1^+$ to a smooth mean-convex domain but this is not much more difficult and it will not affect our argument later.) By the maximum principle, it is easy to check $$w_\tau \geq\eta_\tau \mbox{ in }B_1^+.$$

Fix any point $x$ on $\partial\Omega$ and let $P$ be a supporting hyperplane at the point $x$. After translation and rotation, we can assume that $x$ is the origin and $P$ is the hyperplane $\{x_n=0\}$ without loss of generality. Since $\phi$ and $l$ are both uniformly continuous, for any $\epsilon>0$ there is a $\delta>0$ such that 
$$
|(l+\phi)(y)-(l+\phi)(x)|\leq \epsilon,{ \mbox{ for all } y\in \partial\Omega\mbox{ satisfying }|y-x|\leq \delta.}
$$
Consider the function 
$$
v_+=(l+\phi)(x)+\epsilon+w_\delta,
$$
Clearly, $v_+$ solves the minimal surface equation and $u\leq v_+$ on $\partial(B_\delta\cap\Omega)$. Therefore, it follows from the maximum principle that $u\leq v_+$ in $B_\delta\cap\Omega$. Let
$$
v_-=(l+\phi)(x)-\epsilon-w_\delta.
$$
The same argument leads to the fact $u\geq v_-$ in $B_\delta\cap\Omega$. Now we can take $\delta'$ small enough such that 
$$
|u(y)-(l+\phi)(x)|\leq 2\epsilon, \mbox{ for all }\,y\in B_{\delta'}\cap\bar\Omega.
$$
This completes the proof.
\end{proof}

\begin{proof}[Proof of Proposition \ref{Prop: almost linear comparison theorem}]
Assume $u_1>u_2$ at some point $x_0$. Set 
\begin{align*}
&\epsilon=(u_1(x_0)-u_2(x_0))/2,\\ 
&w=(u_1-u_2-\epsilon), \mbox{ and }\Omega_\epsilon=\{w>0\}.
\end{align*} By Lemma \ref{uni-continuous} we know $\dist(\Omega_\epsilon, \partial\Omega)$ has a positive lower bound and we denote it by $\delta$. For any $x'$ in $\Omega_\epsilon$, we apply the interior gradient estimate for the minimal surface equation (see \cite[Theorem 16.5]{GT}) to $u_i\, (i=1, 2)$  in $B_{\frac{\delta}{2}}(x')$, and it yields 
$$|\nabla u_i(x')|\leq \exp\left(C\frac{\Osc_{B_{\delta/2}}u_i}{\delta}\right)\leq \exp\left(C\frac{2\|u_i-l\|_{C^0({\Omega})}+\delta \lip l}{\delta}\right),$$
where $C$ is a uniform constant depending only on $n$. It follows that $|\nabla u_1|$ and $|\nabla u_2|$ are uniformly bounded in $\Omega_\epsilon$. Therefore, $w$ satisfies a uniformly elliptic equation in $\Omega_\epsilon$ and vanishes on $\partial\Omega_\epsilon$. We also note that $w$ is bounded due to \eqref{pro1.3}. If $\Omega_\epsilon$ is bounded, then the maximum principle shows $w=0$ and it is a contradiction. If $\Omega_\epsilon$ is unbounded, note that it satisfies the exterior cone condition, by Proposition \ref{uni-continuous} $w$ can not be bounded and it is a contradiction again.
\end{proof}

We point out that Edelen-Wang's argument in \cite{EW2021} leads to the following comparison theorem.
\begin{proposition}
If $\Omega$ is a convex domain but not a half space and $u$ is a solution of \eqref{Eq: MSE} with $u\leq l$ on $\partial\Omega$, then $u\leq l$ in $\Omega$.
\end{proposition}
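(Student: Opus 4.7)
My approach is to adapt the Bernstein-type argument of Edelen--Wang \cite{EW2021} to the one-sided comparison setting. I will argue by contradiction: suppose $u(x_0)>l(x_0)$ for some $x_0\in\Omega$, and set $M:=\sup_{\Omega}(u-l)\in(0,+\infty]$. The plan is to slide $l$ upward by $M$ and compare $u$ with the affine function $\tilde l:=l+M$, which is itself a solution of \eqref{Eq: MSE}; the strict inequality $u\le l<\tilde l$ on $\partial\Omega$ will be the source of the contradiction.

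First I would dispose of the case $M<+\infty$, so that $u\le \tilde l$ throughout $\Omega$. If this supremum is attained at some interior point $x_1\in\Omega$, then the difference $u-\tilde l$ satisfies a linear elliptic equation in divergence form (obtained by subtracting the minimal surface equations for $u$ and $\tilde l$ and linearizing), and the strong maximum principle forces $u\equiv\tilde l$, contradicting the boundary condition since $M>0$. Otherwise the supremum is only approached: pick $x_k\in\Omega$ with $|x_k|\to\infty$ and $u(x_k)-\tilde l(x_k)\to 0$, and introduce the translates $u_k(y):=u(x_k+y)-l(x_k)-M$ on $\Omega_k:=\Omega-x_k$. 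Each $u_k$ solves \eqref{Eq: MSE}, satisfies $u_k(y)\le a\cdot y$ (where $a=\nabla l$) with equality approached at the origin, and obeys $u_k\le a\cdot y-M$ on $\partial\Omega_k$. Using one-sided interior gradient estimates for minimal graphs (applicable because $u_k$ is controlled above by a Lipschitz function), I extract a subsequential limit $u_\infty$ on a limit convex domain $\Omega_\infty$ satisfying $u_\infty\le a\cdot y$ in $\Omega_\infty$ with equality at the origin. The strong maximum principle on the connected open set $\Omega_\infty$ then gives $u_\infty\equiv a\cdot y$, and if $\partial\Omega_\infty$ is nonempty, the inherited boundary inequality $u_\infty\le a\cdot y-M$ yields the desired contradiction.

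The main obstacle is the degenerate situation where $\Omega_\infty=\mathbf{R}^n$, which occurs precisely when $\dist(x_k,\partial\Omega)\to+\infty$; here no boundary of $\Omega_\infty$ remains to extract a contradiction from the naive blow-up. In this case I would invoke the full Edelen--Wang machinery: a secondary rescaling at the scale $r_k:=\dist(x_k,\partial\Omega)$ produces a tangent cone at infinity for the graph of $u$, and the hypothesis that $\Omega$ is not a half space---equivalently, that $\Omega$ admits two non-parallel supporting hyperplanes and thus has the exterior cone property---is then used exactly as in \cite{EW2021} to rule out this tangent-cone picture. Finally, the case $M=+\infty$ is reduced to the previous one by exploiting the same exterior cone property to build a catenoidal-type barrier in the complement of $\Omega$, which forces $u-l$ to be bounded above.
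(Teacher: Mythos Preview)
The paper's argument is a one-line reduction that you have missed. It does not slide, translate, or split on whether $M:=\sup_{\Omega}(u-l)$ is finite. Instead, take a connected component $Y$ of $\{x\in\Omega:u(x)>l(x)\}$. On $\partial Y$ one has $u=l$ \emph{exactly} (not merely $u\le l$), so $u|_Y$ is a minimal graph with boundary lying in the hyperplane $\{x_{n+1}=l(x)\}$. Since $\Omega$ is convex but not a half space, $\Omega$---and hence $Y$---is contained in a convex cone or a slab, and the Edelen--Wang Bernstein-type theorem (whose proof works for any domain contained in such a region) gives $u\equiv l$ on $Y$, a contradiction. By passing to $Y$ you convert the one-sided boundary inequality into an exact linear boundary condition and land directly in the hypotheses of \cite{EW2021}; no further work is required.

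Your route instead attempts to reprove the content of \cite{EW2021} by hand, and the case $M=+\infty$ is a genuine gap. The phrase ``build a catenoidal-type barrier in the complement of $\Omega$'' is not a proof: higher-dimensional catenoids are rotationally symmetric and do not fit into an arbitrary exterior cone, and there is no evident supersolution of \eqref{Eq: MSE} that dominates $u$ on $\partial\Omega$ and stays within bounded distance of $l$. Bounding $u-l$ from above is, in fact, of comparable difficulty to the Bernstein-type statement you are trying to establish. Even in the case $M<+\infty$, your compactness step is thinner than it looks: the interior gradient estimate $|\nabla u(x_0)|\le C\exp\bigl(C(\sup_{B_d}u-u(x_0))/d\bigr)$ controls $\nabla u_k$ only at points where $u_k$ is already bounded \emph{below}, so passing to a smooth limit on compact subsets requires a bootstrapping argument (or an appeal to GMT compactness for the area-minimizing graphs) that you do not supply. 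Finally, the ``secondary rescaling'' for the degenerate case $\Omega_\infty=\mathbf R^n$ is simply an invocation of the Edelen--Wang machinery---which you could have applied directly to $Y$ at the outset.
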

\begin{proof}
Suppose $X:=\{x\in \Omega: u(x)-l(x)>0\}\neq\emptyset$ and $Y\subset X$ be one of its connected component. Then $u|_{Y}: Y\to \mathbf{R}$ is a solution of the minimal surface equation with Dirichlet boundary value $l$. Since $Y$ is contained in a convex cone or a slab, then by Edelen and Wang \cite{EW2021} (actually their arguments still work for domains contained in slabs), $u|_Y=l$, which is a contradiction. Hence, $X=\emptyset$.
\end{proof}
This leads to the following quick corollary.
\begin{corollary}
If $\Omega\subsetneq\mathbf{R}^n$ is a convex domain but not a half space and $u$ is a solution of \eqref{Eq: MSE}-\eqref{Eq: boundary value}, then
\begin{equation}\label{Eq: C0 estimate}
l-\|\phi\|_{C^0(\partial\Omega)}\leq u \leq l+\|\phi\|_{C^0(\partial\Omega)}.
\end{equation}
\end{corollary}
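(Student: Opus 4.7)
The plan is to reduce both inequalities in \eqref{Eq: C0 estimate} to the preceding proposition by exploiting the two invariances of the minimal surface operator $\mathcal M$: vertical translation $u\mapsto u+c$ and sign reversal $u\mapsto -u$. For the upper bound I would set $v=u-\|\phi\|_{C^0(\partial\Omega)}$, which solves \eqref{Eq: MSE} because $\mathcal M v=\mathcal M u=0$, and whose boundary value satisfies $v=l+\phi-\|\phi\|_{C^0(\partial\Omega)}\leq l$ on $\partial\Omega$ thanks to $|\phi|\leq\|\phi\|_{C^0(\partial\Omega)}$. Applying the preceding proposition to $v$ with the linear function $l$ then gives $v\leq l$ in $\Omega$, i.e.\ $u\leq l+\|\phi\|_{C^0(\partial\Omega)}$.

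For the lower bound I would apply the same reasoning to $\tilde v=-u-\|\phi\|_{C^0(\partial\Omega)}$ paired with the linear function $-l$. Here $\tilde v$ solves \eqref{Eq: MSE} because $\mathcal M(-u)=-\mathcal M u=0$, and on $\partial\Omega$ we have $\tilde v=-l-\phi-\|\phi\|_{C^0(\partial\Omega)}\leq -l$, so the preceding proposition yields $\tilde v\leq -l$ in $\Omega$, which rearranges to the required $u\geq l-\|\phi\|_{C^0(\partial\Omega)}$.

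There is no substantive obstacle here: the corollary is essentially a formal rewriting of the preceding proposition, and the only bookkeeping point is to arrange the translation and the sign reversal so that each comparison is against a genuinely linear function (as demanded by the proposition) rather than an affine one.
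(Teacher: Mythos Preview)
Your proof is correct and is exactly the argument the paper has in mind: the corollary is stated immediately after the preceding proposition with the remark ``This leads to the following quick corollary,'' and your two applications of that proposition---once to $u-\|\phi\|_{C^0(\partial\Omega)}$ against $l$, once to $-u-\|\phi\|_{C^0(\partial\Omega)}$ against $-l$---are precisely the intended steps. Your closing caveat about linear versus affine is harmless but unnecessary here, since in this paper ``linear function'' is used in the affine sense (cf.\ the normalization $l_1(0)=l_2(0)=0$ in the proof of Theorem~\ref{Thm: comparison}); either reading makes your argument go through.
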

Now we are ready to prove Theorem \ref{Thm: comparison}.
\begin{proof}[Proof for Theorem \ref{Thm: comparison}]
{Without loss of generality, we assume $l_1(0)=l_2(0)=0.$}
When $l_1=l_2$, the desired result follows directly from Proposition \ref{Prop: almost linear comparison theorem} and estimate \eqref{Eq: C0 estimate}. If $l_1\neq l_2$, we may write $l_i=l'+c_ix_n$ for some linear function $l':\mathbf R^{n-1}\to \mathbf R$ and constants $c_1>c_2$ by choosing a suitable coordinate of $\mathbf R^n$. Then we divide the discussion into two cases:

{\it Case 1.} The domain $\Omega$ has all supporting plane parallel to the hyperplane $P=\{x_n=0\}$. Since $\Omega$ is not a half space, it must be a stripe parallel to $P$. Then points in $\Omega$ has a uniform bound in $x_n$ and so we have
$$
\|u_i-l'\|_{C^0(\Omega)}<+\infty.
$$
Clearly $u_i=l'+\phi_i'$ on $\partial\Omega$, where $\phi_i'=\phi_i+c_i x_n$ keeps bounded and uniformly continuous. This reduces to the case when $l_1=l_2$.

{\it Case 2.}
The domain $\Omega$ has one supporting plane $\Sigma$ not parallel to $P$.

Let us consider the domain 
$$
H_+=\left\{x\in\mathbf R^n:x_n>\frac{\|\phi_1\|_{C^0(\partial\Omega)}+\|\phi_2\|_{C^0(\partial\Omega)}}{c_1-c_2}\right\}.
$$
\begin{figure}[htbp]
\centering
\includegraphics[width=4.8cm]{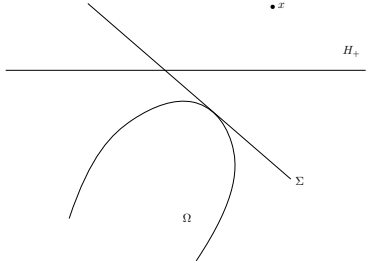}
\caption{The domains $\Omega$ and $H_+$}
\label{Fig: 3}
\end{figure}

As shown in Figure \ref{Fig: 3}, there is at least one point $x\in H_+$ outside $\Omega$ since $H_+$ crosses the hypersurface $\Sigma$.
From the fact $u_1\leq u_2$ on $\partial\Omega$ we can verify $\partial\Omega\cap H_+=\emptyset$. Combined with the convexity of $H_+$, we see that $\Omega\cap H_+$ is empty. 

On the other hand, estimate \ref{Eq: C0 estimate} yields $u_1<u_2$ in
$$
H_-=\left\{x\in\mathbf R^n:x_n<-\frac{\|\phi_1\|_{C^0(\partial\Omega)}+\|\phi_2\|_{C^0(\partial\Omega)}}{c_1-c_2}\right\}.
$$
So we just need to show $u_1\leq u_2$ in $\Omega'=\Omega-\overline{H_-}$. 
Since $\partial\Omega'$ is contained in $\partial H_-$ and $\partial\Omega$, we know $u_1\leq u_2$ on $\partial\Omega'$. Notice that points in $\Omega'$ are uniformly bounded in $x_n$-direction and $u_i\,(i=1,2)$ are uniformly continuous on $\partial\Omega'$. Again we can reduce to the case when $l_1=l_2$.
\end{proof}

\begin{proof}[Proof for the first part in Theorem \ref{Thm: main}]
This case is a direct consequence of Theorem \ref{Thm: comparison}.
\end{proof}

\section{The foliation structure}\label{forliation}
We turn to the case when $\Omega$ is a half space in this section. We begin with the following two propositions.
\begin{proposition}\label{halfconst}
If $\Omega=\mathbf{R}^n_+$, then for any solution $u$ of \eqref{Eq: MSE}-\eqref{Eq: boundary value}, there is a unique real constant $c$ such that for any $x\in\mathbf{R}^n_+$,
$$
|u(x)-l(x)-cx_n|\leq \|\phi\|_{C^0(\partial\Omega)}.
$$
\end{proposition}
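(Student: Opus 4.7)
The uniqueness of $c$ is automatic: if two constants $c_1, c_2$ both satisfy the stated bound, the triangle inequality forces $|c_1 - c_2| x_n \leq 2\|\phi\|_{C^0(\partial\Omega)}$ for all $x_n > 0$, hence $c_1 = c_2$. For existence, my plan is to use the one-parameter family $\{u_c\}_{c \in \mathbf{R}}$ from Proposition \ref{existhalf} as a collection of barriers and to select the correct slope by a sliding comparison.

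The first step is to establish strict monotonicity of the family: $c_1 < c_2$ implies $u_{c_1} < u_{c_2}$ in $\mathbf{R}^n_+$. Setting $v = u_{c_1} - u_{c_2}$, which vanishes on $\partial\mathbf{R}^n_+$, the bounds $\|u_{c_i} - l - c_i x_n\|_{C^0} \leq \|\phi\|_{C^0(\partial\Omega)}$ confine $\{v > 0\}$ to the strip $\{0 < x_n < 2\|\phi\|_{C^0(\partial\Omega)}/(c_2 - c_1)\}$ and give $v \leq 2\|\phi\|_{C^0(\partial\Omega)}$ there. An adaptation of Lemma \ref{uni-continuous} with linear part $l + c_i x_n$ yields uniform continuity of $u_{c_i}$ up to $\partial\mathbf{R}^n_+$, so $\{v > \epsilon\}$ has positive distance to the boundary for any $\epsilon > 0$; the interior gradient estimate then gives uniformly bounded gradients there and a linear uniformly elliptic equation for $v$. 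Since $\{v > \epsilon\}$ lies in the strip, its complement contains a full half-space and hence has the exterior cone property, so Proposition \ref{Prop: unbounded estimate} forces this set to be empty for every $\epsilon > 0$, and the strong maximum principle upgrades $u_{c_1} \leq u_{c_2}$ to strict inequality. Given this monotonicity, I set
\begin{equation*}
c^- := \sup\{c \in \mathbf{R} : u_c \leq u \text{ on } \mathbf{R}^n_+\}, \qquad c^+ := \inf\{c \in \mathbf{R} : u_c \geq u \text{ on } \mathbf{R}^n_+\}.
\end{equation*}
Evaluating $u_c$ at a fixed interior point and letting $c \to \pm\infty$ shows $c^\pm$ are finite, monotonicity gives $c^- \leq c^+$, and passing to local uniform limits yields $u_{c^-} \leq u \leq u_{c^+}$, hence
\begin{equation*}
l + c^- x_n - \|\phi\|_{C^0(\partial\Omega)} \leq u \leq l + c^+ x_n + \|\phi\|_{C^0(\partial\Omega)}.
\end{equation*}
If $c^- = c^+$, the conclusion follows with $c := c^- = c^+$.

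The main obstacle is ruling out $c^- < c^+$. Supposing this strict inequality, $w := u - u_{c^-}$ would be a nonnegative solution of a linear elliptic equation on $\mathbf{R}^n_+$ obtained from the linearization of $\mathcal{M}$ along a convex combination of $u$ and $u_{c^-}$, vanishing on $\partial\mathbf{R}^n_+$ and strictly positive in the interior by the strong maximum principle. The plan is to combine Theorem \ref{Thm: alternative} with a sliding comparison: the decay alternative for $w$ would collapse $w$ to zero by an exhaustion maximum-principle argument, so we must be in the blow-up regime. Then for each $\eta > 0$, the definition of $c^-$ ensures that $T_\eta := \{u_{c^- + \eta} > u\}$ is nonempty and open, has the exterior cone property (its complement contains $\partial\mathbf{R}^n_+$ together with an infinite region), and supports the positive solution $u_{c^- + \eta} - u$ bounded by $\eta x_n + 2\|\phi\|_{C^0(\partial\Omega)}$. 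Applying Theorem \ref{Thm: alternative} on $T_\eta$ and letting $\eta \to 0$ should force $T_\eta$ to eventually become empty, contradicting the definition of $c^-$. The delicate point, which I expect to be the technical core of the argument, is matching the $\eta$-controlled linear growth of $u_{c^- + \eta} - u$ against the blow-up rate supplied by Theorem \ref{Thm: alternative}, since that theorem in its stated form asserts divergence without quantifying the rate.
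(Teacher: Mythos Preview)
Your approach is genuinely different from the paper's, and it has a real circularity problem that you have not addressed. The barrier scheme requires that the sets $\{c:u_c\leq u\}$ and $\{c:u_c\geq u\}$ be nonempty; evaluating $u_c$ at a single interior point only shows these sets are bounded on one side, not that they contain any element. To get $u_c\leq u$ globally for some very negative $c$ you would need a lower bound of the form $u(x)\geq l(x)+cx_n-\|\phi\|_{C^0}$, but that is exactly the content of the proposition. The same circularity infects the later steps: to apply Proposition~\ref{Prop: unbounded estimate} or Theorem~\ref{Thm: alternative} to $w=u-u_{c^-}$ (or to $u_{c^-+\eta}-u$) you need the linearized operator to be uniformly elliptic on the relevant domain, which requires a uniform bound on $|\nabla u|$ there; the interior gradient estimate only delivers this once you already know $u-l$ has at most linear growth in $x_n$. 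In short, every step that would close the argument presupposes the conclusion.

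The paper sidesteps this entirely. It takes the given solution $u$, sets
\[
\varphi=\max\bigl\{\min\{u+\|\phi\|_{C^0},\,l\},\,u-\|\phi\|_{C^0}\bigr\},
\]
which equals $l$ on $\partial\mathbf R^n_+$ and is pinched between $u\pm\|\phi\|_{C^0}$ everywhere, and solves $\mathcal Mv_k=0$ on $B_k^+$ with boundary data $\varphi$. The maximum principle traps each $v_k$ between $u-\|\phi\|_{C^0}$ and $u+\|\phi\|_{C^0}$; passing to the limit gives a solution $v$ of the minimal surface equation on $\mathbf R^n_+$ with $v=l$ on the boundary and $|v-u|\leq\|\phi\|_{C^0}$. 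Now the Edelen--Wang Bernstein theorem for the half-space forces $v=l+cx_n$ for some $c$, and the desired estimate follows immediately. The point is that the paper never needs a priori control on $u$: it manufactures a comparison function $v$ that is automatically close to $u$ by construction, and then classifies $v$ using the rigidity result. Your sliding argument, by contrast, tries to classify $u$ directly and therefore needs the growth control as input rather than output.
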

\begin{proof}

Let $$\varphi=\max\{\min\{u+\|\phi\|_{C^0(\partial\Omega)}, l\}, u-\|\phi\|_{C^0(\partial\Omega)}\}.$$ Then $\varphi=l$ on $\{x_n=0\}$. Similar as Proposition \ref{exhaustion}, we could construct a family of solutions $\{v_k\}$ to the minimal surface equation in $B^+_k$ with $v_k=\varphi$ on $\partial B^+_k$. By the maximum principle, we know $$u-\|\phi\|_{C^0(\partial\Omega)}\leq v_k\leq u+\|\phi\|_{C^0(\partial\Omega)} \mbox{ in } B^+_k.$$ Also, $v_k$ converges to a solution $v\in C^\infty(\Omega)\cap C^0(\overline\Omega)$ to the minimal surface equation with boundary value $l$. Note that we have  $$u-\|\phi\|_{C^0(\partial\Omega)}\leq v\leq u+\|\phi\|_{C^0(\partial\Omega)} \mbox{ in }\Omega.$$ By the Bernstein type theorem in \cite{EW2021}, we know $v(x)=l(x)+cx_n$ for some constant $c$. Hence, $$|u(x)-l(x)-cx_n|\leq \|\phi\|_{C^0(\partial\Omega)}.$$
\end{proof}

\begin{proposition}\label{abmono}
Suppose $\Omega=\mathbf{R}^n_+$ and there are two solution $u_a$ and $u_b$ of \eqref{Eq: MSE}-\eqref{Eq: boundary value} corresponding to two constant $a$ and $b$ as in Proposition \ref{halfconst} respectively. If $a> b$, then $u_a>u_b$ in $\Omega$; and if $a=b$, then $u_a=u_b$ in $\Omega$.
\end{proposition}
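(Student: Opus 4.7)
I would treat the two cases separately.

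For the case $a=b$, I would apply Proposition \ref{Prop: almost linear comparison theorem} twice, once in each direction, with the linear function $l$ there replaced by $l+ax_n$ and with $\phi_1=\phi_2=\phi$. The half-space $\mathbf R^n_+$ satisfies the exterior cone property since its complement contains an infinite cone, and the bound $\|u_i-(l+ax_n)\|_{C^0(\bar\Omega)}\le\|\phi\|_{C^0}$ is exactly what Proposition \ref{halfconst} provides for the constants $a=b$. This immediately forces $u_a=u_b$.

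For the case $a>b$, I would set $w:=u_a-u_b$ and aim at $w>0$ in $\Omega$ by ruling out, for each $\eta>0$, the sub-level set $\Omega_\eta:=\{x\in\Omega:w(x)<-\eta\}$. The estimates from Proposition \ref{halfconst} give $w(x)\ge (a-b)x_n-2\|\phi\|_{C^0}$ on $\Omega$ and $w=0$ on $\partial\Omega$, so $\Omega_\eta$ is confined to the slab $\{x_n\le M_\eta\}$ with $M_\eta:=(2\|\phi\|_{C^0}-\eta)/(a-b)$. Applying Lemma \ref{uni-continuous} to $u_a$ and $u_b$ (with linear parts $l+ax_n$ and $l+bx_n$ respectively) shows $w$ is uniformly continuous on $\bar\Omega$, so $\Omega_\eta$ moreover sits at positive distance $\delta_\eta$ from $\partial\Omega$. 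Interior gradient estimates for the minimal surface equation then give uniform bounds on $\nabla u_a$ and $\nabla u_b$ on a neighborhood of $\Omega_\eta$, so linearizing the minimal surface operator produces a uniformly elliptic divergence-form equation $\mathcal L w=0$ on $\Omega_\eta$.

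The function $\tilde w:=-w-\eta$ is then a bounded positive solution of $\mathcal L\tilde w=0$ in $\Omega_\eta$ vanishing on $\partial\Omega_\eta$. If $\Omega_\eta$ is bounded, the maximum principle immediately contradicts $\tilde w>0$ in $\Omega_\eta$. If $\Omega_\eta$ is unbounded, the slab confinement forces the complement of $\Omega_\eta$ to contain the infinite cone $\{x_n>M_\eta\}$, so $\Omega_\eta$ satisfies the exterior cone property, and Proposition \ref{Prop: unbounded estimate} forces $\tilde w$ to be unbounded; but $\tilde w\le(a-b)M_\eta+2\|\phi\|_{C^0}$ on $\Omega_\eta$, a contradiction. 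Hence $\Omega_\eta=\emptyset$ for every $\eta>0$, i.e.\ $u_a\ge u_b$ in $\Omega$. The strong maximum principle applied to $\mathcal L w=0$ locally (using interior gradient estimates to keep $\mathcal L$ uniformly elliptic on compact subsets of $\Omega$) then upgrades this to $u_a>u_b$ in $\Omega$, since $u_a\equiv u_b$ is ruled out by the uniqueness of the constant in Proposition \ref{halfconst} whenever $a\ne b$.

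The main obstacle I expect is arranging $\Omega_\eta$ so that $w$ genuinely satisfies a uniformly elliptic divergence-form equation there and Proposition \ref{Prop: unbounded estimate} can be invoked. The two required confinements — positive distance from $\partial\Omega$ (for the interior gradient estimate) and a finite $x_n$-slab containment (for both the boundedness of $\tilde w$ and the exterior cone property of $\Omega_\eta$) — come respectively from the uniform continuity supplied by Lemma \ref{uni-continuous} and from the linear-in-$x_n$ growth of $w$.
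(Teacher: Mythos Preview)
Your proposal is correct and follows essentially the same route as the paper for the case $a>b$: confine the sub-level set $\{u_a-u_b<-\eta\}$ to a slab in $x_n$, use Lemma~\ref{uni-continuous} to keep it at positive distance from $\partial\Omega$, linearize to obtain a uniformly elliptic equation, and then invoke either the maximum principle (bounded case) or Proposition~\ref{Prop: unbounded estimate} (unbounded case). For $a=b$ you streamline by invoking Proposition~\ref{Prop: almost linear comparison theorem} directly with linear part $l+ax_n$, whereas the paper simply reruns the same slab/linearization argument; you are also more explicit than the paper in upgrading $u_a\ge u_b$ to the strict inequality via the strong maximum principle and the uniqueness of the constant in Proposition~\ref{halfconst}.
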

\begin{proof}
Note for any $x\in\Omega$, $$|u_a(x)-l(x)-ax_n|\leq \|\phi\|_{C^0(\partial\Omega)}$$ and $$|u_b(x)-l(x)-bx_n|\leq \|\phi\|_{C^0(\partial\Omega)}.$$
Then, $$u_a(x)-u_b(x)>-2\|\phi\|_{C^0(\partial\Omega)}+(a-b)x_n \mbox{ for all } x\in\Omega.$$
If $a>b$, then $u_a>u_b$ for $x_n$ large enough. Hence $$\{u_a-u_b< 0\}\subset \{0<x_n<\tau\}$$ for some constant $\tau$. 

Fix any $\varepsilon>0$ and suppose $\{u_a-u_b< -\varepsilon\}$ is not empty, then we have the following two cases:

{\it Case 1a.  $\{u_a-u_b< -\varepsilon\}$ is bounded.} In this case, there is a constant $R>0$ such that $u_a-u_b\geq 0$ on $\partial B_R^+$ and $\{u_a-u_b<  -\varepsilon\}\subset B_R^+$. Note $w=u_a-u_b$ satisfies the elliptic equation $\partial_i(a_{ij}\partial_jw)=0$ in $B_R^+$, where $$a_{ij}=\int_0^1\frac{1}{\sqrt{1+|p(t)|^2}}\left(\delta_{ij}-\frac{p_i(t)p_j(t)}{1+|p(t)|^2}\right)\mathrm{d}t \mbox{ and } p(t)=(1-t)\nabla u_b+t\nabla u_a.$$ By the maximum principle, $w\geq 0$ in $B_R^+$, which is a contradiction.

{\it Case 1b.  $\{u_a-u_b< -\varepsilon\}$ is unbounded.} In this case, $u_b-u_a-\varepsilon$ is a positive bounded solution to the elliptic equation $\partial_i(a_{ij}\partial_j w)=0$ in unbounded domain $\{u_a-u_b< -\varepsilon\}$ with zero boundary value. Note that by Lemma \ref{uni-continuous}, there is a $\delta>0$ such that $$\{u_a-u_b< -\varepsilon\}\subset\{x_n>\delta\}.$$ By the standard interior gradient estimate (as in the proof of Proposition \ref{Prop: almost linear comparison theorem}), we know $|\nabla u_a|$ and $|\nabla u_b|$ are uniformly bounded in  $\{x_n>\delta\}$. Therefore, $a_{ij}$ is uniformly elliptic. However, Proposition \ref{Prop: unbounded estimate} implies $u_b-u_a-\varepsilon$ is unbounded, which is a contradiction.

In conclusion, if $a>b$ then $\{u_a-u_b< -\varepsilon\}$ is empty for any $\varepsilon>0$. So $u_a-u_b>0$ in $\{x_n>0\}$.

On the other hand, if $a=b$, then $u_a-u_b$ is bounded. Similar as above, we fix any $\varepsilon>0$ and separate the proof to two cases.

{\it Case 2a.  $\{u_a-u_b< -\varepsilon\}$ is bounded.} In this case, $u_a-u_b+\varepsilon$ is a solution to the uniformly elliptic equation with zero boundary value. Hence $u_a-u_b+\varepsilon=0$ in $\{u_a-u_b< -\varepsilon\}$, which is a contradiction.

{\it Case 2b.  $\{u_a-u_b< -\varepsilon\}$ is unbounded.} In this case, as in Case 1b we know Proposition \ref{Prop: unbounded estimate} implies $u_b-u_a-\varepsilon$ is unbouded, which is a contradiction.

Case 2a and Case 2b imply  $u_a-u_b\geq-\varepsilon$ for any $\varepsilon>0$ and hence $u_a\geq u_b$. A similar argument also implies  $u_b\geq u_a$. So $u_a=u_b$ when $a=b$.
\end{proof}

As an immediate consequence of Proposition \ref{halfconst} and Proposition \ref{abmono}.
\begin{corollary}\label{halfconstcoro}
If $\Omega= \mathbf{R}^n_+$, then for any solution $u$ of \eqref{Eq: MSE}-\eqref{Eq: boundary value}, there is a unique constant $c$, such that $u=u_c$, where $u_c$ comes from Proposition \ref{existhalf}.
\end{corollary}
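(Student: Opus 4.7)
The plan is to chain together the two previous propositions with essentially no additional work. Given an arbitrary solution $u$ of \eqref{Eq: MSE}-\eqref{Eq: boundary value} in $\mathbf{R}^n_+$, Proposition \ref{halfconst} produces a unique real constant $c$ such that
\[
|u(x)-l(x)-cx_n|\leq \|\phi\|_{C^0(\partial\Omega)}\quad\text{for all }x\in\mathbf{R}^n_+,
\]
which immediately gives the uniqueness statement on $c$.

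For existence, I would invoke Proposition \ref{existhalf} to produce a solution $u_c$ of \eqref{Eq: MSE}-\eqref{Eq: boundary value} associated with this same constant $c$, i.e.\ satisfying $\|u_c-l-cx_n\|_{C^0(\Omega)}\leq \|\phi\|_{C^0(\mathbf{R}^n)}$. Hence both $u$ and $u_c$ solve the same Dirichlet problem and both correspond to the constant $c$ in the sense prescribed by Proposition \ref{halfconst}.

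Finally I would apply the $a=b$ case of Proposition \ref{abmono} to the pair $(u,u_c)$ to conclude $u=u_c$. There is no genuine obstacle here; the only thing to verify is that the hypotheses of Proposition \ref{abmono} are met, which is immediate from the two $C^0$-estimates above. The entire argument therefore reduces to recording these three citations in order.
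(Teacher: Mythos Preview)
Your proposal is correct and matches the paper's approach exactly: the paper states this corollary as ``an immediate consequence of Proposition~\ref{halfconst} and Proposition~\ref{abmono}'' without further argument, and your three-step chain (Proposition~\ref{halfconst} for the constant $c$, Proposition~\ref{existhalf} for $u_c$, then the $a=b$ case of Proposition~\ref{abmono}) is precisely the intended unpacking.
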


We rewrite the second part of Theorem \ref{Thm: main} to the following proposition.

\begin{proposition}\label{homeo}
For any $c\in\mathbf R$, let $u_c$ be the solution constructed in Proposition \ref{existhalf}. Then the map
$$
\Phi:\mathbf R^n_+\times \mathbf R\to \mathbf R^n_+\times \mathbf R,\, (x,c)\mapsto \left(x,u_c(x)\right)
$$
is a homeomorphism.
\end{proposition}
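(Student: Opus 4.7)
The plan is to verify bijectivity, continuity, and continuity of the inverse for $\Phi$, repeatedly invoking the growth bound $|u_c(x)-l(x)-cx_n|\leq \|\phi\|_{C^0(\partial\Omega)}$ from Proposition \ref{existhalf}, the strict monotonicity in $c$ from Proposition \ref{abmono}, and the parametrization uniqueness from Corollary \ref{halfconstcoro}.

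Bijectivity is quick. If $\Phi(x_1,c_1)=\Phi(x_2,c_2)$, then $x_1=x_2$ and $u_{c_1}(x_1)=u_{c_2}(x_1)$, so the inequality $u_a>u_b$ whenever $a>b$ (Proposition \ref{abmono}) forces $c_1=c_2$. For surjectivity, fix $(x,y)\in\mathbf R^n_+\times\mathbf R$; since $x_n>0$, the growth bound yields $u_c(x)\to\pm\infty$ as $c\to\pm\infty$, so once the continuity of $c\mapsto u_c(x)$ is proved (see below), the intermediate value theorem produces a $c$ with $u_c(x)=y$.

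The heart of the matter is the joint continuity of $(x,c)\mapsto u_c(x)$. Fix $c\in\mathbf R$ and a sequence $c_k\to c$. The family $\{u_{c_k}\}$ enjoys the uniform bound $\|u_{c_k}-l-c_k x_n\|_{C^0(\mathbf R^n_+)}\leq \|\phi\|_{C^0(\partial\Omega)}$, which combined with interior gradient and Schauder estimates for the minimal surface equation produces uniform $C^{2,\alpha}_{\mathrm{loc}}(\mathbf R^n_+)$ bounds. Applying Lemma \ref{uni-continuous} to each $u_{c_k}$ with the linear part $l+c_k x_n$ (whose Lipschitz norm remains bounded since $\{c_k\}$ does), an inspection of its proof shows that the modulus of continuity on $\bar{\mathbf R}^n_+$ can be chosen uniform in $k$. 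Arzel\`a--Ascoli then extracts a subsequence converging in $C^2_{\mathrm{loc}}(\mathbf R^n_+)\cap C^0_{\mathrm{loc}}(\bar{\mathbf R}^n_+)$ to some $\tilde u$ that solves \eqref{Eq: MSE}, has boundary value $l+\phi$, and satisfies $|\tilde u-l-cx_n|\leq \|\phi\|_{C^0(\partial\Omega)}$. Corollary \ref{halfconstcoro} then forces $\tilde u=u_c$, and uniqueness of the subsequential limit promotes convergence to the entire sequence; combined with the $C^1$ interior bounds this gives joint continuity in $(x,c)$ on $\mathbf R^n_+\times\mathbf R$.

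Continuity of $\Phi^{-1}$ follows cleanly from the previous step: if $(x_k,y_k)\to (x,y)$ and $c_k$ are determined by $u_{c_k}(x_k)=y_k$, the growth bound together with $x_n>0$ implies $\{c_k\}$ is bounded, any accumulation point $c^*$ satisfies $u_{c^*}(x)=y=u_c(x)$ by joint continuity, and Proposition \ref{abmono} yields $c^*=c$, so $c_k\to c$. The main obstacle I anticipate is the joint continuity step, and specifically the need to verify that the subsequential limit $\tilde u$ carries both the correct boundary values and the correct growth rate $cx_n$ associated to the limiting parameter. This is precisely where the uniform modulus of continuity from Lemma \ref{uni-continuous}, the uniform growth bound from Proposition \ref{existhalf}, and the identification result of Corollary \ref{halfconstcoro} all have to be marshalled together.
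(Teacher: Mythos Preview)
Your proof is correct and follows essentially the same approach as the paper: both establish joint continuity of $(x,c)\mapsto u_c(x)$ via compactness from the uniform growth bound, interior estimates, and Lemma \ref{uni-continuous}, identifying the subsequential limit through the uniqueness in Proposition \ref{abmono} (you invoke Corollary \ref{halfconstcoro}, which amounts to the same thing); and both handle the inverse by using the growth bound at a point with $x_n>0$ to force boundedness of the parameter sequence. The only cosmetic difference is that the paper phrases the last step as showing $\Phi$ is a closed map, whereas you argue sequential continuity of $\Phi^{-1}$ directly---the underlying computation is identical.
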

\begin{proof}
We first show the continuity of $\Phi$, and it suffices to prove $u_c(x)$ is continuous with respect to $(x, c)$. Take any sequence $(p_k, c_k)\to (p, c)$ as $k\to \infty$, and by definition we know for any $k$, $u_{c_k}$ is solution of \eqref{Eq: MSE}-\eqref{Eq: boundary value} with estimate
$$|u_{c_k}-l-c_kx_n|\leq \|\phi\|_{C^0(\mathbf{R}^n)}.$$
By interior gradient estimate for the minimal surface equation and Lemma \ref{uni-continuous}, up to a subsequence the function $u_{c_k}$ converges to a limit function $u'$ in $C^\infty_{loc}(\mathbf R^n_+)$ and $C^0_{loc}(\overline{\mathbf {R}^n_+})$ satisfying \eqref{Eq: MSE}-\eqref{Eq: boundary value}. Also, $u'$ satisfies $$|u'-l-cx_n|\leq \|\phi\|_{C^0(\mathbf{R}^n)}.$$ By Proposition \ref{abmono}, $u'=u_c$ and hence $u_{c_k}$ converges to $u_c$ in $C^0_{loc}(\overline{\mathbf {R}^n_+})$. This implies $u_{c_k}(p_k)\to u_c(p)$ as $k\to\infty$.

Next,  the bijectivity of $\Phi$ comes from Proposition \ref{existhalf}, Proposition \ref{halfconst}, Proposition \ref{abmono} and the continuity of $\Phi$. 

Finally let us prove $\Phi$ is a closed map, that is, $\Phi(A)$ is closed in $\R_+^n\times\R$ for any closed set $A\subset\R_+^n\times\R.$  
Suppose now $\{(p_k,c_k)\}_{k\geq1}\subset A$ is a sequence such that $$\Phi(p_k,c_k)=(p_k,u_{c_k}(p_k))\to(p_0, q)\mbox{ in } \R_+^n\times\R,\mbox{ as }k\to\infty,$$
and here we write $p_k=(x_1(p_k),\cdots,x_n(p_k))$ and $p_0=(x_1(p_0),\cdots,x_n(p_0)).$ Then, $$x_n(p_k)\to x_n(p_0)>0\mbox{ as }k\to\infty.$$ By Proposition \ref{halfconst}, we deduce that 
$$
\left|\frac{u_{c_k}(p_k)-l(p_k)}{x_n(p_k)}-c_k\right|\leq\frac{\|\phi\|_{C^0(\mathbf{R}^n)}}{x_n(p_k)}$$
for any $k\geq1.$ Thus, $\{c_k\}$ is bounded in $\R.$ Up to a subsequence, we may assume $\{c_k\}$ converges to $c_0$ as $k\to\infty.$ Then, $(x_0,c_0)\in A$ and it follows from the continuity of $\Phi$ that $$q=\lim_{k\to\infty}u_{c_k}(p_k)=u_{c_0}(p_0).$$ Hence, $\Phi(A)$ is closed in $\mathbf{R}_+^n\times\mathbf{R}.$
\end{proof}

The graphs of functions $u_c$ actually form a differential foliation of $\mathbf{R}_+^n\times\mathbf{R}$ if the function $\phi$ has better regularity. For our purpose, let us change the parametrization for the family of functions $u_c$. Note that the restriction map
$$
\Phi_{x_0}:\mathbf R\to \mathbf R,\, c\mapsto u_c(x_0),
$$
is also a homeomorphism for any point $x_0$ in $\mathbf R^n_+$. 

Fix a point $x_0$ in $\mathbf R^n_+$. We define
$$
\bar u_t(x)=u_{c(t)}(x) \mbox{ with } c(t)=\Phi_{x_0}^{-1}(t)
$$
and consider the corresponding map
$$
\bar \Phi:\mathbf R^n_+\times \mathbf R\to \mathbf R^n_+\times \mathbf R,\, (x,t)\mapsto \left(x,\bar u_t(x)\right).
$$

We have the following
\begin{proposition}\label{halfc1}
If $\phi$ is $C^1$ on $\partial\mathbf R^n_+$ with $\|\phi\|_{C^1(\partial \mathbf R^n_+)}<+\infty$, then the map $\bar\Phi$ is a $C^1$-diffeomorphism.
\end{proposition}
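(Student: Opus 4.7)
The plan is to verify the hypotheses of the inverse function theorem, leveraging the fact that $\bar\Phi$ is already a homeomorphism by Proposition \ref{homeo}. The differential of $\bar\Phi$ at a point $(x,t)$ has the block form
$$
D\bar\Phi(x,t) = \begin{pmatrix} I_n & 0 \\ (\nabla_x \bar u_t(x))^{\top} & \partial_t \bar u_t(x) \end{pmatrix},
$$
so invertibility reduces to the single scalar condition $\partial_t \bar u_t(x)\neq 0$. The proof therefore splits into two tasks: show that $(x,t)\mapsto\bar u_t(x)$ is $C^1$ on $\mathbf R^n_+\times\mathbf R$, and verify $\partial_t\bar u_t(x)>0$ everywhere. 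Interior $C^\infty$ regularity of each $u_c$ in $x$ is automatic from the minimal surface equation, so the crux is the parameter dependence.

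The main analytic task is to differentiate $c\mapsto u_c$ at a fixed $c_0$. Setting $w_c := u_c-u_{c_0}$, the argument of Proposition \ref{abmono} shows that $w_c$ satisfies a linear elliptic equation $\partial_i(a_{ij}(x;c,c_0)\partial_j w_c)=0$ in $\mathbf R^n_+$ with zero boundary value, whose coefficients are averages of the minimal surface linearization between $\nabla u_{c_0}$ and $\nabla u_c$. The $C^\infty_{loc}$-convergence $u_c\to u_{c_0}$ supplied by Proposition \ref{homeo} forces these coefficients to converge locally uniformly to the linearization $a_{ij}^0$ of $\mathcal M$ at $u_{c_0}$. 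Normalize $\tilde v_c := w_c/[u_c(x_0)-u_{c_0}(x_0)]$: Proposition \ref{abmono} gives $\tilde v_c>0$, and by construction $\tilde v_c(x_0)=1$ with $\tilde v_c|_{\partial\mathbf R^n_+}=0$. Combining interior Harnack, boundary Harnack, and Schauder estimates yields uniform $C^{1,\alpha}_{loc}$ bounds on $\tilde v_c$, so subsequential limits $v$ exist and satisfy $\partial_i(a_{ij}^0\partial_j v)=0$ in $\mathbf R^n_+$, $v|_{\partial\mathbf R^n_+}=0$, $v(x_0)=1$, and $v\geq 0$.

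The critical step is uniqueness of this limit $v$, which yields genuine existence of $\partial_c u_c|_{c_0}$. Given two candidate limits $v_1,v_2$ and a scalar $\alpha>0$, the difference $v_1-\alpha v_2$ vanishes on $\partial\mathbf R^n_+$ and solves the linearized equation in any connected component of $\{v_1\neq \alpha v_2\}$. Examining unbounded one-signed components and invoking Proposition \ref{Prop: unbounded estimate} (whose exterior cone hypothesis is inherited from $\mathbf R^n_+$, and for which uniform ellipticity follows from global boundedness of $|\nabla u_{c_0}|$, itself a consequence of the $C^0$ control $|u_{c_0}-l-c_0 x_n|\leq\|\phi\|_{C^0}$ together with boundary $C^1$ estimates coming from $\phi\in C^1$) forces $v_1\equiv\alpha v_2$ for the correct $\alpha$; the normalization $v_1(x_0)=v_2(x_0)=1$ then fixes $\alpha=1$, giving $v_1=v_2$. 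Once uniqueness is secured, the strong maximum principle and Hopf lemma yield $v>0$ in $\mathbf R^n_+$ and $\partial_n v>0$ on $\partial\mathbf R^n_+$, and re-running the compactness argument at nearby parameters provides continuity of $c_0\mapsto v$.

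Putting the pieces together, $\bar u_t(x)=u_{c(t)}(x)$ with $c(t)=\Phi_{x_0}^{-1}(t)$ is jointly $C^1$ in $(x,t)$, and by the chain rule $\partial_t\bar u_t(x)=v(x)/v(x_0)>0$. Hence $D\bar\Phi$ is everywhere invertible on $\mathbf R^n_+\times\mathbf R$, and since $\bar\Phi$ is already a homeomorphism, the inverse function theorem promotes it to a $C^1$-diffeomorphism. I expect the principal obstacle to be the uniqueness of the limit $v$: it is essentially a Martin-boundary rigidity statement for the linearized minimal surface operator on a half space, and cashing it out through Proposition \ref{Prop: unbounded estimate} requires careful bookkeeping of growth rates together with the global uniform ellipticity of $a_{ij}^0$.
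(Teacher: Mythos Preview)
Your overall strategy matches the paper's: compute the Jacobian of $\bar\Phi$, reduce to showing $\partial_t\bar u_t(x)>0$, construct this derivative as a limit of normalized difference quotients, and identify the limit as the unique positive solution of the linearized equation on $\mathbf R^n_+$ vanishing on the boundary with value $1$ at $x_0$. Your observation that the $C^1$ bound on $\phi$ yields a global gradient bound on $u_{c_0}$, hence uniform ellipticity of $a_{ij}^0$, is also exactly what the paper uses. Note, incidentally, that your normalization $\tilde v_c=w_c/[u_c(x_0)-u_{c_0}(x_0)]$ is already the difference quotient in the variable $t$ (since $t=u_c(x_0)$ by definition of the reparametrization), so no chain rule is needed at the end; the paper works directly with these $t$-difference quotients.

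The one genuine gap is your uniqueness argument for the limit $v$. Proposition~\ref{Prop: unbounded estimate} says that a positive solution with zero boundary value in a domain with the exterior cone property must be \emph{unbounded}. But if $v_1,v_2$ are two candidate limits and you form $v_1-\alpha v_2$, its restriction to an unbounded positive component is not known to be bounded---both $v_1$ and $v_2$ are themselves unbounded---so Proposition~\ref{Prop: unbounded estimate} yields no contradiction. ``Careful bookkeeping of growth rates'' does not close this: what is actually needed is a global two-sided bound on the ratio $v_1/v_2$, which comes from the boundary Harnack inequality together with the cone structure of $\mathbf R^n_+$. The paper handles this by invoking Theorem~\ref{Thm: positive solution in cone} directly: since $\mathbf R^n_+$ is a Lipschitz cone and the linearized operator is uniformly elliptic (thanks to the global gradient bound), positive solutions vanishing on the boundary are unique up to scaling, and the normalization $v(x_0)=1$ pins down the limit. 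Once you replace your appeal to Proposition~\ref{Prop: unbounded estimate} by Theorem~\ref{Thm: positive solution in cone}, the rest of your argument goes through.
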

\begin{proof}
First notice that we can improve estimates for functions $u_c$ from the $C^1$-bound of $\phi$. From the gradient estimate for the minimal surface equation (see \cite{GT, Han, JWZ2019}), for any $c_*>0$ there is a universal constant $C=C(c_*,\|\phi\|_{C^1(\partial\mathbf{R}^n_+)},n)$ such that
\begin{equation}\label{Eq: gradient}
|\nabla u_c(x)|\leq C,\mbox{ for all } c \in [-c_*,c_*] \mbox{ and } x\in \overline{\mathbf R^n_+}.
\end{equation}
On the other hand, given any compact subset $K$ in $\mathbf R^n_+$ and any nonnegative integer $k$ there is a universal constant $C_k=C_k(K,c_*,n)$ such that
\begin{equation}\label{Eq: interior estimate}
|\nabla^ku_c(x)|\leq C_k,\mbox{ for all } c \in [-c_*,c_*]\mbox{ and }x\in K.
\end{equation}

The rest proof will be divided into three steps.

{\it Step 1. The functions $\bar u_t(x)$ is differentiable with respect to $t$.} We are going to show
$$
\frac{\partial}{\partial t}\bar u_t(x)=\bar v_t(x),
$$
where $\bar v_t$ is the unique solution to the equation
\begin{equation}\label{Eq: equation bar v t}
\partial_i(\bar a_{ij,t}\partial_j\bar v_t)=0 \mbox{ in } \mathbf R^n_+
\end{equation}
with the condition 
\begin{equation}\label{Eq: condition bar v t}
\bar{v}_t=0 \mbox{ on } \partial\mathbf R^n_+,\mbox{ and }\bar v_t(x_0)=1,
\end{equation}
where
$$
\bar a_{ij,t}=\delta_{ij}-\frac{\partial_i\bar u_{t}\partial_j\bar u_{t}}{(1+|\nabla\bar u_t|^2)^{\frac{3}{2}}}.
$$
The uniqueness of $\bar v_t$ follows from the gradient estimate \eqref{Eq: gradient} and Theorem \ref{Thm: positive solution in cone}.
For any real number $\tau$, let us take 
$$
\bar v_{t,\tau}=\tau^{-1}\left(\bar u_{t+\tau}-\bar u_t\right).
$$
It suffices to show that for any sequence $\tau_k$ such that $\tau_k\to 0$ as $k\to+\infty$ the functions $\bar v_{t,\tau_k}$ converges $\bar v_t$. The idea is to use the uniqueness of $\bar v_t$. Clearly, by Proposition \ref{abmono} each $\bar v_{t,\tau_k}$ is a positive function on $\mathbf R^n_+$ with value 1 at point $x_0$, which vanishes on the boundary $\partial\mathbf R^n_+$. It also satisfies the equation
\begin{equation}\label{Eq: bar v t tau}
\partial_i(\bar a_{ij,t,\tau_k}\partial_j\bar v_{t,\tau_k})=0 \mbox{ in } \mathbf R^n_+,
\end{equation}
where
$$
\bar a_{ij,t,\tau_k}=\delta_{ij}-\int_0^1\frac{\partial_i\bar w_{s,\tau_k}\partial_j\bar w_{s,\tau_k}}{(1+|\nabla\bar w_{s,\tau_k}|^2)^{\frac{3}{2}}}\,\mathrm ds
$$
with
$$
\bar w_{s,\tau_k}=(1-s)\bar u_t+s\bar u_{t+\tau_k}.
$$
From the interior estimate \eqref{Eq: interior estimate} the coefficients $\bar a_{ij,t,\tau_k}$ converges smoothly to $\bar a_{ij,t}$ in any compact subset of $\mathbf R^n_+$ up to a subsequence. On the other hand, we also have good estimates for functions $\bar v_{t,\tau_k}$. As a beginning, we point out that they have locally uniform $C^0$-bounds. To see this, we take $\rho>0$ large enough such that the semi-ball $B^+_\rho=B_\rho\cap \mathbf R^n_+$ contains the point $x_0$, and then construct a solution $\bar w_{\tau_k}$ of equation \eqref{Eq: bar v t tau} in $B_\rho^+$ by prescribing a Dirichlet boundary value $\psi\in C^0(\partial B_\rho^+)$, which is positive on $\partial B_\rho\cap \mathbf R^n_+$ and vanishes on $\partial\mathbf R^n_+$. It is not difficult to see that $\{\bar w_{\tau_k}\}_k$ is compact in $C^0(\overline{B_\rho^+})$ and so  $$\bar w_{\tau_k}(x_0)>\epsilon_0,$$ where $\epsilon_0$ is a positive constant independent of $k$. Then the boundary Harnack inequality (see Theorem \ref{Thm: boundary harnack}) combined with the maximum principle yields that
$$
\bar v_{t,\tau_k}\leq C'\epsilon_0^{-1}\|\psi\|_{C^0} \mbox{ in } B_{\rho/2}^+,
$$
where $C'$ is a positive constant independent of $k$.
From the standard elliptic PDE theory, the functions $\bar v_{t,\tau_k}$ must have locally uniform up-to-boundary H\"older estimate and locally uniform $C^l$-estimates in $\mathbf R^n_+$, so it converges to a limit function $\bar v_t'$ up to a subsequence in $C^\infty_{loc}(\mathbf R^n_+)\cap C^0_{loc}(\overline{\mathbf R^n_+})$, which solves \eqref{Eq: equation bar v t}-\eqref{Eq: condition bar v t}. Since \eqref{Eq: equation bar v t}-\eqref{Eq: condition bar v t} has a unique solution, it implies $\bar v_t'=\bar v_t$, and we obtain the desired consequence.

{\it Step 2. All partial derivatives of the function $\bar U(x,t):=\bar u_{t}(x)$ are continuous with respect to $(x,t)$.} First we deal with $\bar v_{t}(x)$, the partial derivative of $\bar U(x,t)$ with respect to $t$. It suffices to show that $\bar v_{t'}$ converges to $\bar v_t$ in $C^\infty_{loc}(\mathbf R^n_+)$ as $t'\to t$ and
the proof is almost identical to that in Step 1. The only modification is that the functions $\bar v_{t,\tau_k}$ need to be replaced by the functions $\bar v_{t_k}$ with $t_k\to t$ as $k\to+\infty$, where $\bar v_{t_k}$ is the unique solution of \eqref{Eq: equation bar v t}-\eqref{Eq: condition bar v t} (after $t$ is replaced by $t_k$). The argument in Step 1 goes smoothly without any difficulty in this case.

It remains to show that $\partial_i\bar u_t$ is continuous with respect to $(x,t)$. With the same idea, we would like to prove that $\bar u_{t'}$ converges to $\bar u_t$ in $C^\infty_{loc}(\mathbf R^n_+)$ as $t'\to t$. Take any sequence $t_k\to t$ as $k\to+\infty$. Recall that $\bar u_t$ is exactly the function $u_{c(t)}$, where $c(t)=\Phi_{x_0}^{-1}(t)$ is continuous with respect to $t$. Similar as the proof of the continuity part in Proposition \ref{homeo}, from Proposition \ref{existhalf} and Lemma \ref{uni-continuous}, up to a subsequence the function $u_{c(t_k)}$ converges to a limit function $u_{c(t)}$ in $C^\infty_{loc}(\mathbf R^n_+)\cap C^0_{loc}(\overline{\mathbf {R}^n_+})$ satisfying \eqref{Eq: MSE}-\eqref{Eq: boundary value}. 

{\it Step 3. $\bar \Phi$ is a $C^1$-diffeomorphism.} From the definition we see
$$
\bar \Phi(x,t)=\Phi\left(x,\Phi^{-1}_{x_0}(t)\right).
$$
This yields that $\bar \Phi$ is a homeomorphism. According to the inverse function theory, all we need to show is that the Jacobian of the map $\bar \Phi$ has non-zero determinant. It is not difficult to see
\[
J_{\bar \Phi}=\left(
\begin{array}{cc}
I_{n\times n}&0\\
\ast& \bar v_t(x)
\end{array}\right).
\]
From Step 1 we know $\bar v_t(x)$ is positive in $\mathbf R^n_+$, and hence the determinant of $J_{\bar\Phi}$ is non-zero. So we complete the proof.
\end{proof}

\begin{remark}
From our proof, if $\phi$ is $C^k$ on $\partial\R^n_+$ with bounded $\|\phi\|_{C^k(\partial\R^n_+)}$, then $\bar\Phi$ is a $C^k$-diffeomorphism.
\end{remark}

\appendix
\section{Some preliminary results}\label{Sec: A}
{
\subsection{Boundary Harnack inequality}\label{Subsec: uniqueness}
First we will recall the boundary Harnack inequality from \cite{DSS20}.
Let $x=(x',x_n)$ and $B_1'$ be the unit ball in $\mathbf R^{n-1}$. Let $g:B_1'\to \mathbf R$ be a Lipschitz function with Lipschitz norm $L$ and $g(0)=0$. Define the graph
$$
\Gamma=\{(x',x_n)\in\mathbf R^n:x_n=g(x'),\,x'\in B_1'\}
$$
and the height function
$$
h_{\Gamma}:B_1'\times \mathbf R\to\mathbf R,\, (x',x_n)\mapsto x_n-g(x').
$$
For convenience, we let
$$
\mathcal C_{r,\rho}=\{x=(x',x_n)\in B_1'\times \mathbf R: |x'|\leq r,\,0<h_\Gamma(x)<\rho\}
$$
and $\mathcal C_{r}=\mathcal C_{r,r}$. 
\begin{figure}[htbp]
\centering
\includegraphics[width=6cm]{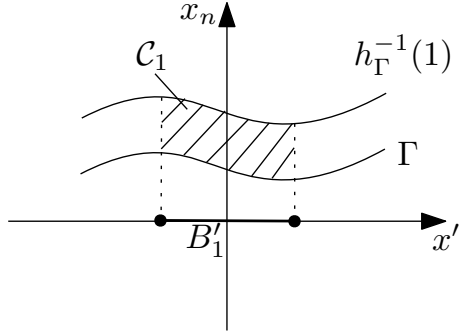}
\caption{Domain $\mathcal C_1$ above Lipschitz graph $\Gamma$}
\label{Fig: 2}
\end{figure}

In the following, we consider the following uniformly elliptic operator of divergence form
$$
\mathcal Lu:=\Div(A(x)\nabla u),
$$
with $A(x)$ is bounded and measurable satisfying
$$
\lambda I\leq A(x)\leq \lambda^{-1} I, \mbox{ for some constant } \lambda\in(0, 1).
$$

The boundary Harnack inequality can be stated as the following.
\begin{theorem}\label{Thm: boundary harnack}
If $u_1$ and $u_2$ are solutions of the equation
$$
\mathcal Lu=0 \mbox{ in } \mathcal C_1
$$
satisfying
\begin{itemize}
\item $u_1$ and $u_2$ vanish on $\Gamma$,
\item $u_1$ and $u_2$ are positive in $\mathcal C_1$,
\item and $u_1(0,\frac{1}{2})=u_2(0,\frac{1}{2})=1$,
\end{itemize}
then there is a universal constant $C=C(n,\lambda, L)$ such that
$$
C^{-1}\leq \frac{u_1}{u_2}\leq C \mbox{ in } \mathcal C_{1/2}.
$$
\end{theorem}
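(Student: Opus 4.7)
The approach I would take is the classical Caffarelli--Fabes--Mortola--Salsa scheme: first establish a Carleson-type growth estimate and a matching lower bound at the boundary, then derive the ratio estimate from an iterated oscillation-decay argument. The underlying tools are the interior Harnack inequality (available from Moser theory in our divergence setting) together with barrier constructions adapted to the Lipschitz cone complement of $\Gamma$.

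First, I would establish two boundary control estimates. For a positive solution $u$ of $\mathcal L u = 0$ in $\mathcal C_1$ that vanishes on $\Gamma$ and satisfies $u(0, 1/2) = 1$, I want to show
\[
C_0^{-1}\, h_\Gamma(x) \le u(x) \le C_0 \quad \text{in } \mathcal C_{3/4},
\]
for a universal constant $C_0 = C_0(n,\lambda,L)$. The upper (Carleson) bound is obtained by a Harnack-chain argument: any interior value of $u$ can be compared with its value at the non-tangential reference point $(0, 1/2)$ through a controlled chain of overlapping balls staying inside $\mathcal C_1$, whose number depends only on $L$ and $n$. The lower bound is produced by constructing an $\mathcal L$-subsolution supported in the slightly enlarged cylinder $\mathcal C_{7/8}$ and vanishing on $\Gamma$; existence follows from solving a Dirichlet problem in $\mathcal C_{7/8}$ with suitable positive data, and the quantitative vanishing rate $\asymp h_\Gamma$ is forced by the Lipschitz regularity of $\Gamma$ together with the uniform ellipticity.

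Second, I would iterate to obtain the ratio estimate. For $r \in (0, 1/2]$ define
\[
M(r) = \sup_{\mathcal C_r} \frac{u_1}{u_2}, \qquad m(r) = \inf_{\mathcal C_r} \frac{u_1}{u_2}, \qquad \omega(r) = M(r) - m(r).
\]
Both $M(r) u_2 - u_1$ and $u_1 - m(r) u_2$ are nonnegative $\mathcal L$-solutions in $\mathcal C_r$ vanishing on $\Gamma$, and at the non-tangential reference point of $\mathcal C_r$ one of them exceeds $\tfrac12 \omega(r) u_2$. Applying the lower bound above (rescaled to $\mathcal C_r$) to this function and the upper bound to the other, one concludes $\omega(r/2) \le \theta \,\omega(r)$ with universal $\theta \in (0, 1)$. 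Iterating from $r = 1/2$ yields a uniform $L^\infty$-bound on $u_1/u_2$ in $\mathcal C_{1/2}$, which combined with the initial normalization $u_1(0,1/2)=u_2(0,1/2)=1$ gives $C^{-1} \le u_1/u_2 \le C$.

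The main obstacle is the careful tracking of how constants depend on the Lipschitz constant $L$ in the Carleson step. Because $\Gamma$ is only Lipschitz, one must build Harnack chains whose radii shrink proportionally to $h_\Gamma$ so that they remain inside $\mathcal C_1$; the number of steps then grows logarithmically in $h_\Gamma(x)^{-1}$ with a multiplicative factor depending on $L$, and controlling this uniformly against the Moser constants is the technical heart of the argument. The divergence form of $\mathcal L$ precludes pointwise arguments, but the De Giorgi--Nash--Moser interior Harnack inequality and boundary Hölder estimate are precisely the ingredients needed to make each step of the scheme go through.
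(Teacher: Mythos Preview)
The paper does not prove this theorem; it is quoted from the literature (the reference \cite{DSS20}) as a known boundary Harnack inequality, and the appendix merely states it for later use. So there is no proof in the paper to compare your attempt against.

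Your outline follows the classical Caffarelli--Fabes--Mortola--Salsa route, which is indeed one standard way this result is established. However, there is a genuine gap in your first step. The claimed lower bound $u(x)\ge C_0^{-1}h_\Gamma(x)$ is false in general: for divergence-form operators with merely bounded measurable coefficients, positive solutions vanishing on $\Gamma$ need not decay linearly; the De Giorgi--Nash--Moser theory only gives H\"older vanishing, and the exponent depends on the coefficients, not just on $(n,\lambda,L)$. Consequently there is no universal barrier comparable to $h_\Gamma$. The correct CFMS argument replaces this with a comparison against the $\mathcal L$-harmonic measure of a surface ball (equivalently, against the Green function), which is itself a positive $\mathcal L$-solution vanishing on the relevant piece of $\Gamma$; both $u_1$ and $u_2$ are then shown to be comparable to this same object, and the ratio bound follows.

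Your iteration step inherits the same problem: even if the linear lower bound held, applying it to $M(r)u_2-u_1$ and the Carleson bound to $u_2$ would only give $(M(r)u_2-u_1)/u_2 \gtrsim \omega(r)\,h_\Gamma(x)/r$, which degenerates as $x\to\Gamma$ and does not yield $\omega(r/2)\le\theta\,\omega(r)$. In fact the oscillation-decay scheme for the ratio is normally invoked \emph{after} one already knows that any two positive solutions vanishing on $\Gamma$ are comparable---it then upgrades the ratio to H\"older continuous up to $\Gamma$---so as written the argument is circular. The missing ingredient is precisely the harmonic-measure (or Green-function) comparison mentioned above.
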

\subsection{Positive solutions of uniformly elliptic linear equations in cones}
In the following, the operator $\mathcal L$ is assumed to satisfy the same requirements in the previous subsection. {The symbol $\mathcal C$ means an infinite Lipschitz cone now.}
\subsubsection{Existence and uniqueness of the solution}
\begin{theorem}\label{Thm: positive solution in cone}
The equation $\mathcal Lu=0$ in $\mathcal C$ with $u=0$ on $\partial\mathcal C$ and $u>0$ in $\mathcal C$ has a unique solution in $W^{2,p}_{loc}(\mathcal C)\cap C^0(\bar{\mathcal C})$ up to a scaling.
\end{theorem}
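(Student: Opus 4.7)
The plan is to prove existence by an exhaustion argument and uniqueness by combining the scale invariance of $\mathcal C$ with the boundary Harnack inequality (Theorem~\ref{Thm: boundary harnack}) and the strong maximum principle.

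For existence, I would fix an interior reference point $x_0\in\mathcal C$ and, for each integer $k>2|x_0|$, solve the Dirichlet problem
\[
\mathcal Lu_k=0\text{ in }D_k:=\mathcal C\cap B_k,\quad u_k=0\text{ on }\partial\mathcal C\cap\overline{B_k},\quad u_k=1\text{ on }\partial B_k\cap\mathcal C.
\]
The normalized sequence $v_k:=u_k/u_k(x_0)$ is then positive by the maximum principle with $v_k(x_0)=1$. Interior Harnack gives locally uniform upper bounds on compact subsets of $\mathcal C$, while Theorem~\ref{Thm: boundary harnack} together with the standard boundary H\"older regularity delivers locally uniform bounds up to $\partial\mathcal C$. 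A diagonal extraction using $W^{2,p}_{loc}$ estimates then produces a subsequential limit $u\in W^{2,p}_{loc}(\mathcal C)\cap C^0_{loc}(\bar{\mathcal C})$, which is the desired positive solution.

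For uniqueness, I would take two positive solutions $u_1,u_2$ normalized by $u_1(x_0)=u_2(x_0)=1$. The crucial point is that $\mathcal C$ is invariant under dilations $x\mapsto Rx$ and the ellipticity bound $\lambda$ is preserved under the corresponding rescaling $A(x)\mapsto A(Rx)$, so Theorem~\ref{Thm: boundary harnack} applies at every scale with one universal constant. Iterating this at successively smaller scales around the vertex, and, after a Kelvin-type inversion $x\mapsto x/|x|^2$ that sends $\mathcal C$ to itself and preserves the class of uniformly elliptic divergence-form operators, at successively larger scales around infinity, yields geometric decay of the oscillation of the ratio $u_1/u_2$ at both ends of $\mathcal C$. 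Consequently $u_1/u_2$ extends continuously to the vertex and admits a positive limit at infinity, so $\mu:=\inf_{\mathcal C}(u_1/u_2)>0$. Setting $w:=u_1-\mu u_2\geq 0$, I obtain a nonnegative solution of $\mathcal Lw=0$ vanishing on $\partial\mathcal C$; if $w\not\equiv 0$, the strong maximum principle forces $w>0$ in $\mathcal C$, and then the same oscillation-decay argument applied to the pair $(w,u_2)$ yields $\inf_{\mathcal C}(w/u_2)>0$, contradicting $\inf_{\mathcal C}(w/u_2)=\inf_{\mathcal C}(u_1/u_2)-\mu=0$. Hence $w\equiv 0$, and the normalization forces $\mu=1$, so $u_1\equiv u_2$.

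The main technical obstacle is the scale-invariant iteration of Theorem~\ref{Thm: boundary harnack} together with the inversion trick needed to handle behavior at infinity. Both leverage the self-similarity of the cone $\mathcal C$ in an essential way, a property that fails for the general unbounded domains of Section~\ref{decayblowup}; this is precisely why the general setting there is reduced to the present cone case by extending the ambient domain to a surrounding cone and then invoking the present theorem.
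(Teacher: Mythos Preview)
Your existence argument is essentially the paper's: exhaust $\mathcal C$ by $\mathcal C\cap B_k$, solve Dirichlet problems with data vanishing on the lateral boundary, normalize at a fixed interior point, and pass to the limit via interior and boundary Harnack together with $W^{2,p}_{loc}$ estimates. No issue there.

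Your uniqueness argument also ends the same way as the paper's: once $\mu:=\inf_{\mathcal C}(u_1/u_2)>0$ is known, the function $w=u_1-\mu u_2$ is a nonnegative solution vanishing on $\partial\mathcal C$, and the strong maximum principle plus the global ratio bound applied to $(w,u_2)$ forces $w\equiv 0$. The gap is in how you obtain the global ratio bound, specifically the step at infinity.

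The Kelvin-type inversion $x\mapsto x/|x|^2$ does map $\mathcal C$ to itself, but it does \emph{not} preserve the class of uniformly elliptic divergence-form operators when $n\geq 3$. If $u$ solves $\Div(A\nabla u)=0$ and $v(y)=u(y/|y|^2)$, a change-of-variables computation gives that $v$ solves $\Div(\tilde A\nabla v)=0$ with $\tilde A(y)=|\det DT|\,(DT)^{-1}A(T(y))(DT)^{-T}$, and since $DT(y)=|y|^{-2}(I-2\hat y\otimes\hat y)$ one finds $\tilde A(y)\sim |y|^{4-2n}$ near $y=0$. For $n\geq 3$ this is genuinely degenerate at the new vertex, so Theorem~\ref{Thm: boundary harnack} is not applicable there and your ``iteration at successively larger scales around infinity'' does not go through as stated. (For the Laplacian the classical Kelvin transform inserts the weight $|y|^{2-n}$ precisely to cure this, but that trick does not survive passage to variable $A$.)

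The paper avoids this obstacle entirely: instead of seeking oscillation \emph{decay} of $u_1/u_2$, it proves directly that the ratio is globally bounded between two universal constants. The mechanism is a maximum-principle propagation: boundary Harnack at scale $r$ gives $C_1^{-1}\leq u_1/u_2\leq C_1$ on $\partial B_r\cap\mathcal C$, and since $u_1,u_2$ both vanish on $\partial\mathcal C$, the maximum principle pushes this bound into all of $\mathcal C\cap B_r$. For $s>r$, a short contradiction argument (if $u_1/u_2>C_1^2$ at $\gamma\cap\partial B_s$, rescale and apply the previous step to $u_1$ and $C(P)u_2$ to contradict the bound already established inside $B_s$) shows the ratio stays in $[C_1^{-2},C_1^2]$ along the ray at every scale, hence in $[C_1^{-3},C_1^3]$ everywhere. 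This uses only the cone's dilation invariance and the maximum principle---no inversion, no oscillation iteration toward infinity---and works in every dimension. You could repair your argument by replacing the Kelvin step with this outward propagation.
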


We point out that Theorem \ref{Thm: positive solution in cone} is the main theorem proved in \cite{LN85} by Landis and Nadirashvili. For reader's convenience, here we provide a different proof inspired from \cite{BWZ16} based on the boundary Harnack inequality.

\begin{proof}[Proof of existence]
Let $r_n$ be a sequence of positive real numbers with $r_n\to+\infty $ as $n\to\infty$. We let $B_n$ be the ball centered at the pole of $\mathcal C$ with radius $r_n$ and $\mathcal C_n$ the intersection $\mathcal C\cap B_n$. It is easy to construct a continuous function $\phi_n$ on $\partial\mathcal C_n$ such that $\phi$ vanishes on $\partial\mathcal C\cap B_n$ and $\phi$ is positive on $\mathcal C\cap\partial B_n$. Clearly, we can find a solution $u_n\in W^{2,p}(\mathcal C_n)\cap C^0(\bar{\mathcal C_n})$ of the equation $\mathcal Lu_n=0$ in $\mathcal C_n$ with the Dirichlet boundary value $\phi_n$ (see \cite{GT} for instance). The maximum principle then yields that $u_n$ is positive in $\mathcal C_n$. Fix a point $P$ in the intersection of all $\mathcal C_n$. Up to a scaling we can normalize the function $u_n$ to satisfies $u_n(P)=1$. From the interior Harnack inequality, the boundary Harnack inequality (Theorem \ref{Thm: boundary harnack}) and the $W^{2, p}$-estimate, $u_n$ converges to a nonnegative function $u$ satisfying $\mathcal Lu=0$ in $\mathcal C$ with zero boundary value on $\partial\mathcal C$.
\end{proof}

\begin{proof}[Proof of uniqueness]
Fix a ray $\gamma$ in $\mathcal C$ starting from the pole of $\mathcal C$. First we are going to show the following property: if $u$ and $v$ are two solutions satisfying the hypothesis in Theorem \ref{Thm: positive solution in cone} such that $u$ equals to $v$ at the point $\gamma\cap\partial B_r$ for some $r>0$, then there is a universal constant $C_0=C_0(n,\lambda,\mathcal C)$ such that
$$
C_0^{-1}\leq \frac{u}{v}\leq C_0 \mbox{ in } \mathcal C.
$$

To prove this, we first notice that if $u$ equals $v$ at the point $\gamma\cap\partial B_r$, then there is a universal constant $C_1=C_1(n,\lambda,\mathcal C)$ such that
$$
C_1^{-1}\leq \frac{u}{v}\leq C_1 \mbox{ on }\partial B_r\cap\mathcal C.
$$
By the maximum principle, we conclude that $C_1^{-1}\leq u/v\leq C_1$ holds for any point in $\mathcal C\cap B_r$. On the other hand, suppose we have the inequality $$C_1^{-1}\leq \frac{u}{v}\leq C_1$$ at some point in $\mathcal C\cap B_s$, then
\begin{equation}\label{outer}C_1^{-2}\leq \frac{u(P)}{v(P)}\leq C_1^2,\end{equation}  where $P=\gamma\cap \partial B_s$. Otherwise, if $u/v>C_1^2$ at the point $P$, then there is a constant $C(P)$ such that $C(P)>C_1^2$ and $u(P)=C(P)v(P)$. By applying previous argument to $u$ and $C(P)v$, we have $$u\geq C^{-1}_1 C(P)v>C_1 v$$ in $\mathcal{C}\cap B_s$, which is a contradiction. The left hand side inequality of \eqref{outer} could be proved in the same way.  Clearly, \eqref{outer} holds for all large $s$ and it provides a control of the ratio $u/v$ at infinity. Applying the boundary Harnack inequality and the maximum principle once again, finally we arrive at
$$
C_0^{-1}\leq \frac{u}{v}\leq C_0\mbox{ in } \mathcal C,$$ where $C_0=C_1^3.$

Now the uniqueness is almost direct. Let $u$ be the solution contructed above and $v$ be any other solution. From previous discussion, we see that there is a positive constant $\epsilon$ such that $\epsilon u\leq v\leq \epsilon^{-1} u$. Define
$$
\epsilon^*:=\sup\left\{\epsilon>0:v>\epsilon u\right\}
$$
and we consider the function $w:=v-\epsilon^* u$. Clearly, $w$ is a nonnegative solution of the equation $\mathcal Lw=0$ in $\mathcal C$ with zero boundary value on $\partial \mathcal C$. The Harnack inequality yields that $w$ is either the zero function or a positive function in $\mathcal C$. In the latter case, we can find another positive constant $\epsilon'$ such that $w\geq \epsilon' u$ and so $v\geq(\epsilon^*+\epsilon')u$. This contradicts to the definition of $\epsilon^*$ and we complete the proof.
\end{proof}
As a special case of Theorem \ref{Thm: positive solution in cone}, we see
\begin{corollary}\label{Cor: harmonic in cone}
If $\mathcal L=\Delta$ is the Laplace operator and the cone $\mathcal C$ has the form of
$
\mathcal C=\{tx:t>0,\,x\in S\}
$
for a smooth domain $S$ in $\mathbf S^{n-1}$, then any harmonic function in $\mathcal C$ with $u=0$ on $\partial\mathcal C$ and $u>0$ in $\mathcal C$ has the form $u=cr^\beta \phi_1$, where $c$ is a positive constant, $\phi_1$ is the first eigenfunction of $S$ with corresponding first eigenvalue $\lambda_1$ and
$$
\beta=\frac{-(n-2)+\sqrt{(n-2)^2+4\lambda_1}}{2}.
$$
\end{corollary}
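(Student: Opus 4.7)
The plan is to exploit the uniqueness statement in Theorem \ref{Thm: positive solution in cone} by exhibiting one explicit positive harmonic function on $\mathcal C$ vanishing on $\partial\mathcal C$, and then appealing to uniqueness to conclude that any other such function is a scalar multiple of it. Since $S\subset\mathbf S^{n-1}$ is smooth and bounded, the Dirichlet Laplacian on $S$ has a first eigenvalue $\lambda_1>0$ with a unique (up to scaling) positive first eigenfunction $\phi_1\in C^\infty(\bar S)$ satisfying $-\Delta_{\mathbf S^{n-1}}\phi_1=\lambda_1\phi_1$ in $S$ and $\phi_1=0$ on $\partial S$.

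Next I would write down the ansatz $u_0(r,\theta):=r^\beta\phi_1(\theta)$ in polar coordinates on $\mathcal C$, where $r=|x|$ and $\theta=x/|x|\in S$. Using the decomposition
\[
\Delta=\partial_r^2+\frac{n-1}{r}\partial_r+\frac{1}{r^2}\Delta_{\mathbf S^{n-1}},
\]
a direct computation gives
\[
\Delta u_0=\bigl[\beta(\beta-1)+(n-1)\beta-\lambda_1\bigr]r^{\beta-2}\phi_1=\bigl[\beta^2+(n-2)\beta-\lambda_1\bigr]r^{\beta-2}\phi_1.
\]
Choosing $\beta$ to be the positive root of $\beta^2+(n-2)\beta-\lambda_1=0$, namely
\[
\beta=\frac{-(n-2)+\sqrt{(n-2)^2+4\lambda_1}}{2},
\]
makes $u_0$ harmonic in $\mathcal C$. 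By construction $u_0>0$ in $\mathcal C$ and $u_0=0$ on $\partial\mathcal C$, and $u_0$ clearly lies in $W^{2,p}_{\mathrm{loc}}(\mathcal C)\cap C^0(\bar{\mathcal C})$ since $\phi_1$ is smooth up to $\partial S$.

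Finally, given any other positive harmonic function $u$ on $\mathcal C$ vanishing on $\partial\mathcal C$, Theorem \ref{Thm: positive solution in cone} applied with $\mathcal L=\Delta$ implies that $u$ and $u_0$ must differ by a positive multiplicative constant, so $u=c\,r^\beta\phi_1$ for some $c>0$. There is no real obstacle here; the only subtlety is making sure the first-eigenfunction $\phi_1$ satisfies the regularity needed so that $u_0$ is an admissible function in the uniqueness statement, which follows from the smoothness of $\partial S$ and standard elliptic theory on the sphere.
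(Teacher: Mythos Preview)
Your proof is correct and matches the paper's intended argument: the paper presents this corollary without a separate proof, simply as an immediate special case of Theorem~\ref{Thm: positive solution in cone}, and you have supplied precisely the missing verification---constructing the explicit separable solution $r^\beta\phi_1$ and invoking the uniqueness-up-to-scaling statement.
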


\subsubsection{H\"older decay or blow up alternative of the solution from the Harnack inequality}

\begin{lemma}\label{Lem: a drop inside}
{Let $\mathcal C_\rho=\mathcal C\cap B_\rho$ and let $\mathcal A_r$} be the annulus $\mathcal A_r:=\mathcal C_{4r}-\bar {\mathcal C}_r$ for any $r>0$. If $u\in W^{1,2}(\mathcal A_r)\cap C^0(\bar{\mathcal A_r})$ is a solution of $\mathcal Lu=0$ in $\mathcal A_r$ such that $u\leq 0$ on the side boundary $\partial \mathcal C\cap(B_{4r}-\bar B_r)$, then we have
$$
\sup_{\partial B_{2r}\cap \mathcal C}u^+\leq (1-\delta)\sup_{\partial \mathcal A_r} u^+
$$
for some universal constant $\delta=\delta(n,\lambda,\mathcal C)$, where $u^+$ is the positive part of $u$.
\end{lemma}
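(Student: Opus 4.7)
The plan is a standard oscillation-decay argument via comparison with a harmonic measure. By the scale invariance of the class of operators $\mathcal L$ with uniform ellipticity $\lambda$, I would first reduce to $r=1$ (rescaling $x\mapsto x/r$ preserves the class and the geometry of the cone). Set $M:=\sup_{\partial\mathcal A_1}u^+$ and assume $M>0$. Because $u\le 0\le M$ on the side boundary $\partial\mathcal C\cap(B_4\setminus\bar B_1)$ while $u\le M$ on the two spherical pieces $\partial B_1\cap\mathcal C$ and $\partial B_4\cap\mathcal C$ by the definition of $M$, the maximum principle gives $u\le M$ throughout $\mathcal A_1$. Set $v:=M-u$, a nonnegative $\mathcal L$-solution in $\mathcal A_1$ satisfying $v\ge M$ on the side boundary.

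Next I would compare $v$ with the $\mathcal L$-harmonic measure of the side boundary. Let $\phi\in W^{2,p}_{\mathrm{loc}}(\mathcal A_1)\cap C^0(\bar{\mathcal A_1})$ solve $\mathcal L\phi=0$ in $\mathcal A_1$ with $\phi\equiv 1$ on $\partial\mathcal C\cap(B_4\setminus\bar B_1)$ and $\phi\equiv 0$ on $(\partial B_1\cup\partial B_4)\cap\mathcal C$; such a $\phi$ exists by the standard solvability in the bounded Lipschitz domain $\mathcal A_1$ (cf.\ \cite{GT}). By the maximum principle $0\le\phi\le 1$ and $v\ge M\phi$ in $\mathcal A_1$. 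So it suffices to prove
$$
\phi\ge \delta\mbox{ on }\partial B_2\cap\mathcal C
$$
for some universal $\delta=\delta(n,\lambda,\mathcal C)>0$, since then $u=M-v\le M(1-\phi)\le(1-\delta)M$ on $\partial B_2\cap\mathcal C$, which is the claim.

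The core of the argument is the uniform lower bound on $\phi$. I would carry it out in two stages. First, using that $\mathcal C$ is an infinite Lipschitz cone, fix a point $y_0\in\partial\mathcal C$ with $|y_0|=2$ and an associated interior ``corkscrew'' point $z_0\in\mathcal A_1$ with $|z_0-y_0|\le C_0$ and $\dist(z_0,\partial\mathcal C)\ge c_0>0$, where $C_0,c_0$ depend only on the Lipschitz character of $\mathcal C$. A short barrier/maximum-principle comparison in a small half-ball near $y_0$ (exploiting $\phi\equiv 1$ on the side boundary) shows $\phi(z_0)\ge \delta_0$ for a universal $\delta_0$. The interior Harnack inequality applied along a chain of balls in $\mathcal A_1$ of length depending only on $\mathcal C$ then propagates this bound to $\phi\ge\delta_1$ on the portion of $\partial B_2\cap\mathcal C$ that stays at distance $\ge c_0/2$ from $\partial\mathcal C$. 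Second, for points $x\in\partial B_2\cap\mathcal C$ within distance $c_0/2$ of the side boundary, I would invoke the boundary Harnack inequality (Theorem \ref{Thm: boundary harnack}) in a small cylinder over a Lipschitz graph piece of $\partial\mathcal C$ to compare $\phi(x)$ with $\phi$ at a nearby corkscrew point, where the first-stage bound applies. Taking $\delta=\min(\delta_1,\delta_2)$ completes the proof.

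The \emph{main obstacle} is precisely this last stage: obtaining the uniform lower bound on $\phi$ along $\partial B_2\cap\mathcal C$ all the way up to the side boundary of the cone. The interior comparison and the maximum-principle reduction from $v$ to $M\phi$ are routine, but uniform control of $\phi$ near $\partial\mathcal C$ relies essentially on the Lipschitz structure of the cone together with the boundary Harnack inequality of Theorem \ref{Thm: boundary harnack}.
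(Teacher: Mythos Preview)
Your overall plan is correct and essentially the same as the paper's: after scaling to $r=1$ and normalizing, the paper also sets $v:=1-u$ (your $v=M-u$), observes $v\ge 0$ with $v=1$ on the side boundary, obtains a uniform lower bound $v\ge 1/2$ near $\partial\mathcal C\cap\partial B_2$, and then propagates this with the interior Harnack inequality. Your detour through the auxiliary $\mathcal L$-harmonic measure $\phi$ is harmless but unnecessary; the paper works with $v$ directly.

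There is, however, a genuine mistake in your ``main obstacle.'' The boundary Harnack inequality of Theorem~\ref{Thm: boundary harnack} compares two positive solutions \emph{vanishing} on the Lipschitz boundary portion; it is inapplicable to $\phi$, which equals $1$ on $\partial\mathcal C$. The correct tool near the side boundary is the boundary H\"older estimate (as the paper uses, citing \cite[Theorem 3.6]{LZLH}): since $\phi$ is continuous up to $\partial\mathcal C$ with $\phi\equiv 1$ there and $\|\phi\|_{L^\infty}\le 1$, one gets $\phi\ge 1/2$ in a uniform neighborhood of $\partial\mathcal C\cap\partial B_2$. In other words, the near-boundary region is the \emph{easy} part, not the obstacle; you already used exactly this mechanism in your first stage to get $\phi(z_0)\ge\delta_0$, and it applies just as well to every point of $\partial B_2\cap\mathcal C$ close to $\partial\mathcal C$. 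With this correction the argument goes through.
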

\begin{proof}
If $u$ is non-positive on $\partial \mathcal A_r$, then it follows from the maximum principle that both sides of above inequality equal to zero. So we just need to deal with the case when $u$ is positive somewhere on $\partial\mathcal A_r$.  Without no loss of generality, we can assume $u=0$ on the side boundary $\partial \mathcal C\cap(B_{4r}-\bar B_r)$ and $0\leq u\leq 1$ on $\partial \mathcal A_r$, otherwise we can introduce $u'$ and here $u'$ is the solution to the same elliptic equation in $\mathcal A_r$ with the Dirichlet boundary value $$\left(\sup_{\mathcal \partial \mathcal{A}_r}u\right)^{-1}\max\left\{0, u|_{\mathcal \partial \mathcal{A}_r}\right\}.$$ 
Based on these assumptions, it is easy to see $0\leq u\leq 1$ by the maximum principle and $u=u^+$ on $\partial \mathcal A_r$. Without loss of generality, we only deal with the case when $r=1$.

Let us consider the function $v=1-u$. Note the $L^\infty$-norm of $v$ is bounded by 1, we conclude that $v$ is bounded below by $\frac{1}{2}$ in some neighborhood of $\partial\mathcal C\cap \partial B_2$ by the boundary H\"older estimate (see \cite[Theorem 3.6]{LZLH}). Combined with the Harnack inequality, $v$ must be bounded below by a positive constant $\delta$ on $\partial B_2\cap\mathcal C$. This yields $u\leq 1-\delta$ on $\partial B_2\cap\mathcal C$ and we complete the proof.
\end{proof}

Based on this lemma, we have the following alternative.
\begin{lemma}\label{Lem: alternative}
Let $u\in W^{2,p}_{loc}(\mathcal C)\cap C^0(\bar{\mathcal C})$ be a solution of the equation $\mathcal Lu=0$ in $\mathcal C$ with $u=0$ on $\partial\mathcal C-B_1$ and $u>0$ in $\mathcal C-B_1$. Then the function
$$
\mathrm{osc}(r)=\max_{\mathcal C\cap \partial B_r} u
$$
has a limit as $r\to+\infty$, and the limit is either $+\infty$ or $0$. Moreover, either $\Osc(r)\geq cr^\beta$ or $\Osc(r)\leq cr^{-\beta}$ holds for some positive constants $c$ and $\beta$. 
\end{lemma}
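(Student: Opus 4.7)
The plan is to combine two ingredients: an elementary quasiconvexity property of $\Osc(r)$ coming from the maximum principle, together with the quantitative drop already established in Lemma \ref{Lem: a drop inside}.

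As a first step, I would verify the inequality $\Osc(r_2)\leq \max\{\Osc(r_1),\Osc(r_3)\}$ for all $1\leq r_1<r_2<r_3$. This follows by applying the maximum principle to $u$ on the annular region $\mathcal C\cap (B_{r_3}-\bar B_{r_1})$: the side boundary $\partial\mathcal C\cap (B_{r_3}-\bar B_{r_1})$ lies in $\partial\mathcal C-B_1$, where $u=0$ by assumption, while on the spherical pieces $u$ is bounded by $\Osc(r_1)$ and $\Osc(r_3)$ respectively. Consequently, every sublevel set $\{r\geq 1:\Osc(r)\leq t\}$ is an interval, so $\Osc$ is quasiconvex on $[1,\infty)$ and therefore eventually monotone: past some threshold $r_0\geq 1$, either (a) $\Osc$ is non-increasing, or (b) $\Osc$ is non-decreasing.

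In a second step, I would apply Lemma \ref{Lem: a drop inside}, which yields $\Osc(2r)\leq (1-\delta)\max\{\Osc(r),\Osc(4r)\}$ for every $r\geq 1$. In Case (a), monotonicity for $r\geq r_0$ gives $\Osc(4r)\leq \Osc(r)$, so this simplifies to $\Osc(2r)\leq (1-\delta)\Osc(r)$; iterating dyadically produces $\Osc(2^k r_0)\leq (1-\delta)^k \Osc(r_0)$, and interpolating via monotonicity across $[2^k r_0,2^{k+1}r_0]$ yields $\Osc(r)\leq c\, r^{-\beta}$ with $\beta=\log_2\tfrac{1}{1-\delta}>0$, so in particular $\Osc(r)\to 0$. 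In Case (b), monotonicity forces $\Osc(4r)\geq \Osc(r)$, and the drop inequality rearranges to $\Osc(4r)\geq (1-\delta)^{-1}\Osc(2r)$; dyadic iteration from some $s_0\geq 2r_0$, where $\Osc(s_0)>0$ by the hypothesis $u>0$ in $\mathcal C-B_1$, produces $\Osc(2^k s_0)\geq (1-\delta)^{-k}\Osc(s_0)$, and combining with monotonicity yields the matching lower bound $\Osc(r)\geq c\, r^{\beta}$, so in particular $\Osc(r)\to+\infty$.

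The only mildly subtle point is the quasiconvexity of $\Osc$: attempting to run the argument directly from the drop inequality alone would require deciding at each dyadic scale which of $\Osc(r)$ or $\Osc(4r)$ dominates the right-hand side, which is awkward without prior monotonicity. The maximum principle bypasses this cleanly, after which the dichotomy between Cases (a) and (b) is automatic and the remaining iteration is a routine geometric-series computation.
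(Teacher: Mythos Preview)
Your proposal is correct and follows essentially the same approach as the paper: both first use the maximum principle on annuli to obtain eventual monotonicity of $\Osc(r)$, then feed Lemma~\ref{Lem: a drop inside} into a dyadic iteration to produce the power-law decay or growth. The only cosmetic difference is that in the decreasing case the paper assumes $\Osc$ is globally bounded by $C$ and uses this $C$ directly on both spherical boundaries of each annulus (so the max in the drop lemma is trivially $\leq C$), whereas you use monotonicity to identify which endpoint realizes the max; both routes give the same $\beta$ and the same conclusion.
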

\begin{proof}
It is clear that $u$ cannot be a constant function. Then it follows from the maximum principle that the function $\Osc(r)$ cannot have a local maximum, and so it must be monotone when $r$ is large enough. Hence the function $\Osc(r)$ has a limit when $r$ tends to $+\infty$.

To show the alternative for the limit of $\Osc(r)$, we just need to show $\Osc(r)\to 0$ as $r\to+\infty$ under the assumption that $\Osc(r)$ is bounded from above by a positive constant $C$. Fix $r_0$ to be a large positive constant. Since $u\leq C$ holds on $\partial B_{r}\cap\mathcal C$ for all $r\geq r_0$, we obtain by Lemma \ref{Lem: a drop inside} that $u\leq (1-\delta)C$ on $\partial B_{r}\cap\mathcal C$ for all $r\geq 2r_0$. Through iteration we conclude that $u\leq (1-\delta)^kC$ on $\partial B_{r}\cap\mathcal C$ for all $r\geq 2^kr_0$, which yields $\Osc(r)\to 0$ as $r\to +\infty$. 
From above discussion, it is not difficult to deduce the inequality
$$
\Osc(r)\leq C\left(\frac{r}{r_0}\right)^{\log_2(1-\delta)},\mbox{ for all }r\geq r_0.
$$
This yields $\Osc(r)\leq cr^{-\beta}$ for some positive constants $c$ and $\beta$ in the case when $\Osc(r)\to 0$.  

Now let us deal with the case when $\Osc(r)\to +\infty$. At this time, $\Osc(r)$ must be monotone increase when $r\geq r_0$ for some positive constant $r_0$. From Lemma \ref{Lem: a drop inside} we have
$$
\Osc\left(4^kr_0\right)\geq (1-\delta)^{-k}\Osc(r_0).
$$
This can be used to deduce
$$
\Osc(r)\geq \Osc(r_0)\left(\frac{r}{r_0}\right)^{-\log_4(1-\delta)}, \mbox{ for all } r\geq r_0.
$$
This yields $\Osc(r)\geq cr^{\beta}$ for some positive constants $c$ and $\beta$ and we complete the proof.
\end{proof}
Now it is clear that
\begin{corollary}\label{Cor: unbounded}
Let $u\in W^{2,p}_{loc}(\mathcal C)\cap C^0(\bar{\mathcal C})$ be a solution of the equation $\mathcal Lu=0$ in $\mathcal C$ with $u=0$ on $\partial\mathcal C$ and $u>0$ in $\mathcal C$. Then $u$ is unbounded.
\end{corollary}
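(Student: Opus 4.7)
The plan is to derive the corollary as a direct consequence of Lemma~\ref{Lem: alternative}. Since the hypothesis of that lemma requires $u$ to vanish on $\partial\mathcal{C}-B_1$ and to be positive in $\mathcal{C}-B_1$, and here we have the strictly stronger conditions $u=0$ on all of $\partial\mathcal{C}$ and $u>0$ on all of $\mathcal{C}$, the lemma applies. It tells us that $\Osc(r)=\max_{\mathcal{C}\cap\partial B_r}u$ has a limit as $r\to+\infty$, and this limit is either $+\infty$ or $0$. If the limit is $+\infty$, then $u$ is unbounded and there is nothing more to prove.

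The remaining task is to rule out the decaying alternative $\Osc(r)\to 0$, and I plan to do this using only the maximum principle. Given any $\varepsilon>0$, choose $R_\varepsilon$ so large that $\Osc(r)<\varepsilon$ for every $r\geq R_\varepsilon$. On the bounded domain $\mathcal{C}\cap B_{R_\varepsilon}$ the function $u$ vanishes on the lateral boundary $\partial\mathcal{C}\cap B_{R_\varepsilon}$ and is at most $\varepsilon$ on the cap $\mathcal{C}\cap\partial B_{R_\varepsilon}$, so the maximum principle for $\mathcal{L}$ yields $u\leq \varepsilon$ throughout $\mathcal{C}\cap B_{R_\varepsilon}$. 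On the complementary region $\mathcal{C}\setminus\bar B_{R_\varepsilon}$ the bound $u\leq \varepsilon$ is immediate from the definition of $\Osc(r)$ at each radius $r\geq R_\varepsilon$. Combining these two pieces gives $u\leq \varepsilon$ on all of $\mathcal{C}$, and since $\varepsilon>0$ was arbitrary we obtain $u\equiv 0$, contradicting $u>0$ in $\mathcal{C}$.

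In short, the argument is a clean two-step reduction: invoke the decay-or-blow-up alternative of Lemma~\ref{Lem: alternative}, then use the maximum principle to kill the decay branch. All of the real work has already been done inside that lemma and its companion Lemma~\ref{Lem: a drop inside}, so I do not expect any serious obstacle in writing up the corollary; the only minor point to double-check is that the maximum principle is applied on the bounded piece $\mathcal{C}\cap B_{R_\varepsilon}$ (where it is standard), with the behavior outside $B_{R_\varepsilon}$ handled directly by the definition of $\Osc(r)$ rather than by an additional PDE argument.
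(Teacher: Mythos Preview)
Your proof is correct and follows the same approach as the paper's: invoke the alternative of Lemma~\ref{Lem: alternative} and eliminate the decay branch via the maximum principle. The paper compresses this into a single sentence (``it follows from the maximum principle and the alternative that $\Osc(r)\to+\infty$''), while you have spelled out the $\varepsilon$-argument explicitly; the content is the same.
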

\begin{proof}
Since $u$ cannot be the zero function, it follows from the maximum principle and the alternative that $\Osc(r)\to +\infty$ as $r\to +\infty$.
\end{proof}
}

\section*{Acknowledgments}
G. Jiang is supported by China Postdoctoral Science Foundation 2021TQ0046. Z. Wang is supported by the Special Research Assistant Project at Chinese Academy of Sciences. J. Zhu is supported by China Postdoctoral Science Foundation BX2021013.
\bibliography{bib}
\bibliographystyle{amsplain}
\end{document}